\newcommand{\executeiffilenewer}[3]{%
\ifnum\pdfstrcmp{\pdffilemoddate{#1}}%
{\pdffilemoddate{#2}}>0%
{\immediate\write18{#3}}\fi%
}
\newcommand{%
\executeiffilenewer{.svg}{.pdf}%
{inkscape -z -D --file=.svg %
--export-pdf=.pdf --export-latex}%
\input{.pdf_tex}%
}[1]{%
\executeiffilenewer{#1.svg}{#1.pdf}%
{inkscape -z -D --file=#1.svg %
--export-pdf=#1.pdf --export-latex}%
\input{#1.pdf_tex}%
}
\newtheorem{theo}{Theorem}[section]
\newtheorem{propo}[theo]{Proposition}
\newtheorem{lema}[theo]{Lemma}
\newtheorem{example}[theo]{Example}
\newtheorem{coro}[theo]{Corollary}
\newtheorem{ques}[theo]{Problem}
\DeclareMathOperator{\ev}{ev}
\def\A{{\mbox {\boldmath $A$}}}
\def\B{{\mbox {\boldmath $B$}}}
\def\C{{\mbox {\boldmath $C$}}}
\def\D{{\mbox {\boldmath $D$}}}
\def\E{{\mbox {\boldmath $E$}}}
\def\I{{\mbox {\boldmath $I$}}}
\def\J{{\mbox {\boldmath $J$}}}
\def\matrix0{{\mbox {\boldmath $O$}}}
\def\e{{\mbox{\boldmath $e$}}}
\def\j{{\mbox{\boldmath $1$}}}
\def\vec0{\mbox{\bf 0}}
\def\vecnu{{\mbox{\boldmath $\nu$}}}
\newcommand\tran{\mkern-2mu\raise1.25ex\hbox{$\scriptscriptstyle\top$}\mkern-3.5mu}
\def\ev{\mathop{\rm ev }\nolimits}
\def\sp{\mathop{\rm sp }\nolimits}
\def\G{\Gamma}
\def\diag{\mathop{\rm diag }\nolimits}
\def\1{{\bf 1}}
\def\J{{\bf J}}
\def\S{{\bf S}}
\def\X{{\bf X}}
\begin{document}
\title{A characterization and an application of\\ weight-regular partitions of graphs}
\author{Aida Abiad\\
{\small  }\\
{\small Department of Quantitative Economics} \\
{\small Maastricht University} \\
{\small  Maastricht, The Netherlands}\\
{\small  }\\
{\small Department of Mathematics: Analysis, Logic and
Discrete Mathematics} \\
{\small  Ghent University}\\
{\small  Ghent, Belgium}\\
{\small {\tt a.abiadmonge@maastrichtuniversity.nl}} \\
}

\maketitle
%\date{}

\normalsize

\begin{abstract}
%A partition $\mathcal{P}=\{V_1,\ldots,V_m\}$ of the vertex set $V$ of a graph is regular if, for all $i,j$, the number of neighbors which a vertex in $V_i$ has in the set $V_j$ is independent of the choice of vertex in $V_i$.
A natural generalization of a regular (or equitable) partition of a graph, which makes sense also for non-regular graphs, is the so-called weight-regular partition, which gives to each vertex $u\in V$ a weight that equals the corresponding entry $\nu_u$ of the Perron eigenvector $\vecnu$. This paper contains three main results related to weight-regular partitions of a graph. The first is a characterization of weight-regular partitions in terms of double stochastic matrices. Inspired by a characterization of regular graphs by Hoffman, we also provide a new characterization of weight-regularity by using a Hoffman-like polynomial. As a corollary, we obtain Hoffman's result for regular graphs. In addition, we show an application of weight-regular partitions to study graphs that attain equality in the classical Hoffman's lower bound for the chromatic number of a graph, and we show that weight-regularity provides a condition under which Hoffman's bound can be improved.
\end{abstract}
\noindent{\em Keywords:} weight-regular partition, Hoffman polynomial, chromatic number.\\
\noindent{\em Mathematics Subject Classifications:} 05C50, 05C69.

%%%%%%%%%%%%%%%%%%%%%%%%%%%%%%%%%%%%%%%%%%%%%%%%
\section{Introduction}
%%%%%%%%%%%%%%%%%%%%%%%%%%%%%%%%%%%%%%%%%%%%%%%%
Let $G$ be a connected graph with vertex set $V$, adjacency matrix $\A$, positive eigenvector $\vecnu$ and corresponding eigenvalue $\lambda_1$.
 A partition $\mathcal{P}=\{V_1,\ldots,V_m\}$ of the vertex set $V$ of a graph is \emph{regular} (\emph{equitable}) if, for all $i,j$,  the number of neighbors $b_{ij}(u)$ which a vertex $u\in V_i$ has in the set $V_j$ is independent of $u$, and we write $b_{ij}(u) = b_{ij}$ for any $u\in V_i$.

 %the number of neighbors which a vertex in $V_i$ has in the set $V_j$ is independent of the choice of vertex in $V_i$.

 Regular partitions have been widely studied in the literature and provide a handy tool for obtaining inequalities and regularity results concerning the structure of regular graphs. In particular, characterizations of regular partitions and its application to obtain tight bounds for several graph parameters have been previously obtained \cite{H1995}.

A natural generalization of a regular partition, which makes sense also for non-regular graphs, is the so-called \emph{weight-regular partition}. Its definition is based on giving to each vertex $u\in V$ a weight which equals the corresponding entry $\nu_u$ of $\vecnu$. Such weights ``regularize'' the graph, leading to a kind of regular partition that can be useful for general graphs.

 Weight partitions have been shown to be a powerful tool used to extend several relevant results for non-regular graphs. Weight partitions were first used by Haemers in 1979 \cite{BS1979} (see Theorem 6) to provide an alternative proof of Hoffman's spectral lower bound for the chromatic number of a general graph.
 %(Note that for the well-known Hoffman inequality for the chromatic number of regular graphs it is not necessary to use weights).
In \cite{FG1999, F1999}, Fiol and Garriga  formally defined weight partitions, and they used them to obtain several bounds for parameters of non-regular graphs. Examples of such results are an extension of Hoffman's bound for the chromatic number or a generalization of the Lov\'asz bound for the Shannon capacity of a graph. Moreover, weight-regular partitions have been used to show that a bound for the weight-independence number is best possible \cite{F1999} and to obtain spectral characterizations of distance-regularity around a set and spectral characterizations of completely regular codes \cite{FG1999}.

In this work we provide two new characterizations of weight-regularity. The first one is in terms of double stochastic matrices. The second one is inspired by the well-known result by Hoffman \cite{H1963} in which regular graphs are characterized in terms of the Hoffman polynomial. We obtain a new characterization of weight-regularity by using a Hoffman-like polynomial, answering a question of Fiol \cite{JCALM} (see Problem 1.5). Up until now, the only known characterization of weight-regular partitions appears in \cite{F1999} (see Lemma 2.2 and Lemma 2.3). Finally, we give a new application of weight-regular partitions to study graphs that attain equality in Hoffman's bound for the chromatic number \cite{H1963}. As a corollary of the mentioned application, we can show that Hoffman's bound can be improved for certain class of graphs.

This article is organized as follows. Section \ref{sec:weightquotient} recalls some definitions and terminologies about weight partitions. In Section \ref{sec:doublestochastic} we characterize  weight-regular partitions in terms of double stochastic matrices. Section \ref{sec:polys} gives a characterization of weight-regular partitions by using a Hoffman-like polynomial. Finally, in Section \ref{sec:chromatic}, we investigate examples of graphs reaching Hoffman's bound on the chromatic number  of a graph and we show its relation to weight-regular partitions.

%%%%%%%%%%%%%%%%%%%%%%%%%%%%%%%%%%%%%%%%%%%%%%%%
\section{Preliminaries}\label{sec:weightquotient}
%%%%%%%%%%%%%%%%%%%%%%%%%%%%%%%%%%%%%%%%%%%%%%%%
In this section we introduce some definitions and properties relating to weight partitions. The all-ones matrix is denoted $\J$, and $\j$ is the all-ones vector.

Let $G=(V,E)$ be a simple and connected graph on $n=|V|$ vertices, with adjacency matrix $\A$, eigenvalues $\lambda_1\geq \lambda_2 \geq\cdots \geq \lambda_n$ and spectrum
$$\sp G=\sp \A=\{\theta_0^{m_0},\theta_1^{m_1},\ldots,\theta_d^{m_d}\},$$

where the different eigenvalues of $G$ are in decreasing order $\theta_0>\theta_1>\cdots > \theta_d$, and the superscripts stand for their multiplicities $m_i=m(\theta_i)$. The notation $\ev G\equiv\ev \A=\{\theta_0>\theta_1>\cdots > \theta_d\}$ will be used throughout the paper (note that $\theta_0=\lambda_1$ and $\theta_d=\lambda_n$).

Since $G$ is connected
(so $\A$ is irreducible), Perron-Frobenius Theorem assures that $\lambda_{1}$ is simple, positive
and has positive eigenvector. If $G$ is non-connected, the
existence of such an eigenvector is not guaranteed, unless all its
connected components have the same maximum eigenvalue. Throughout
this work, the positive eigenvector associated with the largest
(positive and with multiplicity one) eigenvalue $\lambda_{1}$ is
denoted by $\vecnu=(\nu_{1},\ldots,\nu_{n})^{\top}$. This eigenvector is
normalized in such a way that its minimum entry (in each connected
component of $G$) is $1$. For instance, if $G$ is regular, we
have $\vecnu=\j$.

Let $\mathcal{P}$ be a partition of the vertex set $V=V_{1}\cup
\cdots \cup V_{m}$, $1\leq m\leq n$. We denote by $G(u)$ the set of neighbors of a vertex $u\in V$.
%Consider the map $\bm{\rho}: V\longrightarrow \mathbb{R}^{+}$ defined by $\bm{\rho}U:=\displaystyle \sum_{u\in U}\rho_{u}\e_{u}$. In
%particular, f
For weight-partitions we consider the map
$\bm{\rho}: \mathcal{P}(V)\longrightarrow \mathbb{R}^{n}$
defined by $$\bm{\rho}U:=\displaystyle \sum_{u\in
U}\nu_{u}\e_{u}$$ for any $U\neq \emptyset$, where $\e_{u}$
represents the $u$-th canonical (column) vector,
$\bm{\rho}\emptyset=\vec0$ and $\bm{\nu}$ is the eigenvector of the
largest eigenvalue. Note that, with
$\bm{\rho}u:=\bm{\rho}\{u\}$, we have $||\bm{\rho}u||=\nu_{u}$,
so that we can see $\bm{\rho}$ as a function which assigns
weights to the vertices of $G$. In doing so we ``regularize''
the graph, in the sense that the \emph{weight-degree} $\delta_u^{\ast}$ of each vertex $u\in V$ becomes a constant:

\begin{equation}\label{equation:0}
\delta^{*}_{u}:=\frac{1}{\nu_{u}}\sum_{v\in G
(u)}\nu_{v}=\lambda_{1}.
\end{equation}

Given $\mathcal{P}=\{V_{1},\ldots,V_{m}\}$, for $u\in V_{i}$ we
define the \emph{weight-intersection numbers} as follows:

\begin{equation}\label{equation:1}
b^{*}_{ij}(u):=\frac{1}{\nu_{u}}\sum_{v \in G(u)\cap
V_{j}}\nu_{v}  \qquad (1\leq i,j\leq m).
\end{equation}

Observe that the sum of the weight-intersection numbers for all
$1\leq j\leq m$ gives the weight-degree of each vertex $u\in
V_{i}$:

\begin{equation*}
\sum_{j=1}^{m}b^{*}_{ij}(u)=\frac{1}{\nu_{u}}\sum_{v \in
G(u)}\nu_{v}=\delta^{*}_{u}=\lambda_{1}.
\end{equation*}

A partition $\mathcal{P}$ is called \emph{weight-regular} whenever the
weight-intersection numbers do not depend on the chosen vertex
$u\in V_{i}$, but only on the subsets $V_{i}$ and $V_{j}$. In
such a case, we denote them by

\begin{equation*}
b^{*}_{ij}(u)=b^{*}_{ij} \qquad \forall u\in V_{i},
\end{equation*}

and we consider the $m\times m$ matrix $\B^{*}=(b^{*}_{ij})$,
called the \emph{weight-regular-quotient matrix} of $\A$ with
respect to $\mathcal{P}$.

A matrix characterization of weight partitions can be done via the following
matrix associated with any partition $\mathcal{P}$. The \emph{weight-characteristic matrix} of $\mathcal{P}$ is the
$n\times m$ matrix
$\widetilde{\S}^{*}=(\widetilde{s}^{*}_{uj})$ with entries

$$\widetilde{s}^{*}_{uj}=\left\{
\begin{array}{cc}
\nu_{u} & \mbox{if } u\in V_{j},\\
0 & \mbox{otherwise}
\end{array}
\right.$$

and, hence, satisfying
$(\widetilde{\S}^{*})^{\top}\widetilde{\S}^{*}=\D^{2}$, where
$\D=\diag (||\bm{\rho} V_{1}||,\ldots,||\bm{\rho} V_{m}||)$.

From such a weight-characteristic matrix we define the
\emph{weight-quotient matrix} of $\A$, with respect to
$\mathcal{P}$, as
$\widetilde{\B}^{*}:=(\widetilde{\S}^{*})^{\top}\A\widetilde{\S}^{*}=(\widetilde{b}^{*}_{ij})$.
Notice that this matrix is symmetric and has entries

\begin{align*}
\widetilde{b}^{*}_{ij}=\sum_{u,v \in
V}\widetilde{s}_{ui}^{*}a_{uv}\widetilde{s}_{vj}^{*}=\sum_{u \in
V_{i},v \in V_{j}}a_{uv}\nu_{u}\nu_{v}=\sum_{uv \in
E(V_{i},V_{j})}\nu_{u}\nu_{v}=\widetilde{b}^{*}_{ji}
\end{align*}

where $E(V_{i},V_{j})$ stands for the set of edges with ends in
$V_{i}$ and $V_{j}$ (when $V_{i}=V_{j}$ each edge counts
twice). Also, in terms of the weight-intersection numbers,

\begin{align}\label{equation:2}
\widetilde{b}^{*}_{ij}&=\sum_{u \in V_{i}}\nu_{u}\sum_{v \in G (u)\cap
V_{j}}\nu_{v}=\sum_{u \in
V_{i}}\nu_{u}^{2}b^{*}_{ij}(u)\\
&=\sum_{v \in V_{j}}\nu_{v}\sum_{u \in G (v)\cap
V_{i}}\nu_{u}=\sum_{v \in
V_{j}}\nu_{v}^{2}b^{*}_{ji}(v)=\widetilde{b}^{*}_{ji}.\nonumber
\end{align}

In this article we will use the \emph{normalized weight-characteristic matrix} of $\mathcal{P}$, which is the
$n\times m$ matrix $\overline{\S}^{*}=(\overline{s}^{*}_{uj})$ with entries
obtained by just normalizing the columns of $\widetilde{\S}^{*}$, that is,
$\overline{\S}^{*}=\widetilde{\S}^{*}\D^{-1}$. Thus,

$$\overline{s}^{*}_{uj}=\left\{
\begin{array}{cc}
\frac{\nu_{u}}{||\bm{\rho} V_{j}||} & \mbox{if } u\in V_{j},\\
0 & \mbox{otherwise}
\end{array}
\right.$$

and it holds that $(\overline{\S}^{*})^{\top}\overline{\S}^{*}=\I$. We define the \emph{normalized weight-quotient matrix} of $\A$
with respect to $\mathcal{P}$,
$\overline{\B}^{*}=(\overline{b}^{*}_{ij})$, as

\begin{equation*}
\overline{\B}^{*}=(\overline{\S}^{*})^{\top}\A\overline{\S}^{*}=\D^{-1}(\widetilde{\S}^{*})^{\top}\A\widetilde{\S}^{*}\D^{-1}=\D^{-1}\widetilde{\B}^{*}\D^{-1},
\end{equation*}

and hence $\overline{b}^{*}_{ij}=\frac{\widetilde{b}^{*}_{ij}}{||\bm{\rho} V_{i}||||\bm{\rho}
V_{j}||}.$

The following result was partially stated in \cite{FG1999}.

\begin{lema}\label{lema:implicacioregularIFFweightregular}
A $\mathcal{P}=\{V_1,V_2,\ldots,V_m\}$ partition of a graph $G$ is regular if and only if it is weight-regular and the map on $V$, denoted $\bm{\rho}: u \rightarrow \nu_u$, is constant over each $V_k$ ($1\leq k \leq m$). Then, it holds that the quotient matrix entries of the regular partition ($b_{ij}$) and the quotient matrix entries of the weight-regular partition ($b^{*}_{ij}$) satisfy
$$b^{*}_{ij}=\frac{\nu_{j}}{\nu_{i}}b_{ij}.$$
\end{lema}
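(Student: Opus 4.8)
The plan is to prove both directions by unwinding the definitions of $b_{ij}(u)$ and $b^*_{ij}(u)$ and exploiting the normalization that the minimum entry of $\vecnu$ in each component is $1$.

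First I would handle the easy direction. Suppose $\mathcal{P}$ is weight-regular and $\bm{\rho}$ is constant on each $V_k$, say $\nu_u = \nu_k$ for all $u \in V_k$ (abuse of notation). Then for $u \in V_i$,
\begin{equation*}
b_{ij}(u) = |G(u)\cap V_j| = \frac{1}{\nu_j}\sum_{v\in G(u)\cap V_j}\nu_v = \frac{\nu_i}{\nu_j}\cdot\frac{1}{\nu_i}\sum_{v\in G(u)\cap V_j}\nu_v = \frac{\nu_i}{\nu_j}b^*_{ij}(u) = \frac{\nu_i}{\nu_j}b^*_{ij},
\end{equation*}
which is independent of $u$, so $\mathcal{P}$ is regular with $b_{ij} = \frac{\nu_i}{\nu_j}b^*_{ij}$; equivalently $b^*_{ij} = \frac{\nu_j}{\nu_i}b_{ij}$, the stated identity.

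For the converse, suppose $\mathcal{P}$ is regular. The key step is to show that $\bm{\rho}$ is constant on each cell; weight-regularity and the identity then follow from the computation above read in reverse. To see constancy, I would use the standard fact that a regular partition is equitable for the eigenspaces: since $\A\vecnu = \lambda_1\vecnu$ and $\vecnu > 0$, one shows $\vecnu$ is constant on each cell. Concretely, fix a cell $V_i$ and let $u^* \in V_i$ attain $\min_{u\in V_i}\nu_u =: c$. Summing the eigenvalue equation $\lambda_1\nu_u = \sum_{v\in G(u)}\nu_v = \sum_j \sum_{v\in G(u)\cap V_j}\nu_v$ over $u\in V_i$ gives $\lambda_1\sum_{u\in V_i}\nu_u = \sum_j b_{ij}\sum_{v\in V_j}\nu_v$ after swapping the order of summation and using $|G(v)\cap V_i| = b_{ji}$... rather, one gets a linear recursion showing the vector $(\sum_{u\in V_k}\nu_u)_k$ is a Perron eigenvector of the quotient matrix $\B = (b_{ij})$. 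The cleaner route: the projection argument. Since $\mathcal{P}$ is regular, the subspace $W$ of vectors constant on each cell is $\A$-invariant, hence so is $W^\perp$; writing $\vecnu = \vecnu_W + \vecnu_{W^\perp}$, both pieces are eigenvectors of $\A$ (as $\A$ commutes with orthogonal projection onto $W$) with eigenvalue $\lambda_1$ if nonzero, but $\lambda_1$ is simple, so $\vecnu_{W^\perp} = \vec0$ and $\vecnu \in W$ is constant on each cell.

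The main obstacle is justifying that cell-wise constancy of $\vecnu$ follows from regularity of $\mathcal{P}$ — this is where the Perron–Frobenius simplicity of $\lambda_1$ is essential (it fails for other eigenvalues). Once that is in hand, everything reduces to the one-line substitution $\nu_v = \nu_j$ inside the definitions of the intersection numbers, and the two definitions of regularity become literally the same statement up to the scalar factor $\nu_i/\nu_j$. I should be a little careful that the normalization "minimum entry $1$ in each component" is not actually needed for the equivalence or the formula — it only fixes a representative of $\vecnu$ — so I would state the result for any positive $\lambda_1$-eigenvector and note the normalization is just a convention.
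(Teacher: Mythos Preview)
Your proof is correct and follows essentially the same route as the paper: both directions reduce to substituting $\nu_v=\nu_j$ for $v\in V_j$ into the definition of $b^{*}_{ij}(u)$ and reading off $b^{*}_{ij}=\frac{\nu_j}{\nu_i}b_{ij}$. The only difference is that the paper simply asserts (without argument) that a regular partition forces $\vecnu$ to be constant on each cell, whereas you supply the projection/simplicity argument explicitly; in that respect your write-up is more complete than the paper's own proof.
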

\begin{proof}
%%%%%%%%%%%%%%%%%%%%%%%%%%%%%%%%%%%%%%%%%%%%%% SMALL CHANGES IN THE WHOLE PROOF DUE TO REVISION LAA %%%%%%%%%%%%%%%%%%%%%%%%%%%%%%%%%%%%%%%%%%%%%%%%%%%%%%%%%
First, observe that if $\mathcal{P}=\{V_1,V_2,\ldots,V_m\}$ is a regular partition of the vertex set of $G$, then the component $\nu_u \in V_k$ only depends on $k$ ($1\leq k \leq m$).
If $\mathcal{P}$ is a regular partition, from the above observation, we know that the value of $\nu_u$ is constant on each $V_k$. Then, by direct calculation, we obtain that for any $u\in V_i$ it holds

\begin{equation}\label{eq:relationship}
b^{*}_{ij}(u)=\frac{1}{\nu_{u}}\sum_{v \in G(u)\cap
V_{j}}\nu_{v}=\frac{\nu_{j}}{\nu_{i}}b_{ij}, \qquad 1\leq i,j\leq
m.
\end{equation}

which shows that the partition $\mathcal{P}$ is weight-regular and gives us the relationship between the intersection numbers:
\begin{equation}\label{eq:implicacioregularaweightregular}
b^{*}_{ij}=\frac{\nu_{j}}{\nu_{i}}b_{ij}   \qquad (1\leq i,j\leq
m).
\end{equation}

Conversely, if $\mathcal{P}$ is a weight-regular partition and by assumption it holds that $\nu_u$ is constant for any $u\in V_k$, it follows that for any $u\in V_i$

\begin{equation}\label{eq:relationship2}
b_{ij}(u)=|G(u)\cap V_j|=\frac{1}{\nu_{j}}\sum_{v \in G(u)\cap
V_{j}}\nu_{v}=\frac{\nu_u}{\nu_j}\frac{1}{\nu_u}\sum_{v \in G(u)\cap
V_{j}}\nu_v=\frac{\nu_{i}}{\nu_{j}}b^*_{ij}, \qquad 1\leq i,j\leq
m.
\end{equation}

Hence,
\begin{equation}\label{eq:implicacioweightregularregular}
b_{ij}=\frac{\nu_{i}}{\nu_{j}}b^*_{ij}   \qquad (1\leq i,j\leq
m),
\end{equation}

which means that the partition $\mathcal{P}$ is regular.
%%%%%%%%%%%%%%%%%%%%%%%%%%%%%%%%%%%%%%%%%%%%%% SMALL CHANGES IN THE WHOLE PROOF DUE TO REVISION LAA %%%%%%%%%%%%%%%%%%%%%%%%%%%%%%%%%%%%%%%%%%%%%%%%%%%%%%%%%
\end{proof}

\begin{table}[!h]
\centering
\begin{tabular}{cll}
\hline
& Regular partition  & Weight-regular partition \\
\hline \hline
$m=1$ & $\Longleftrightarrow$ $G$ regular & always\\
$m=2$ & $\Longleftrightarrow$ $G$ biregular & $\Longleftrightarrow$ $G$ bipartite \\
$m=n$ & always & $\Longleftrightarrow$ $G$ regular \\[1ex]
\hline
\end{tabular}
\caption{Some particular cases of trivial partitions.}\label{table:relations}
\end{table}

\begin{example}\label{example}
Let $G$ be a graph with vertex set partitioned as in Figure 1, $\mathcal{P}=\{V_1,V_2,V_3\}$,

\begin{figure}[h!]
\begin{center}
\centering
\includegraphics[width=8cm]{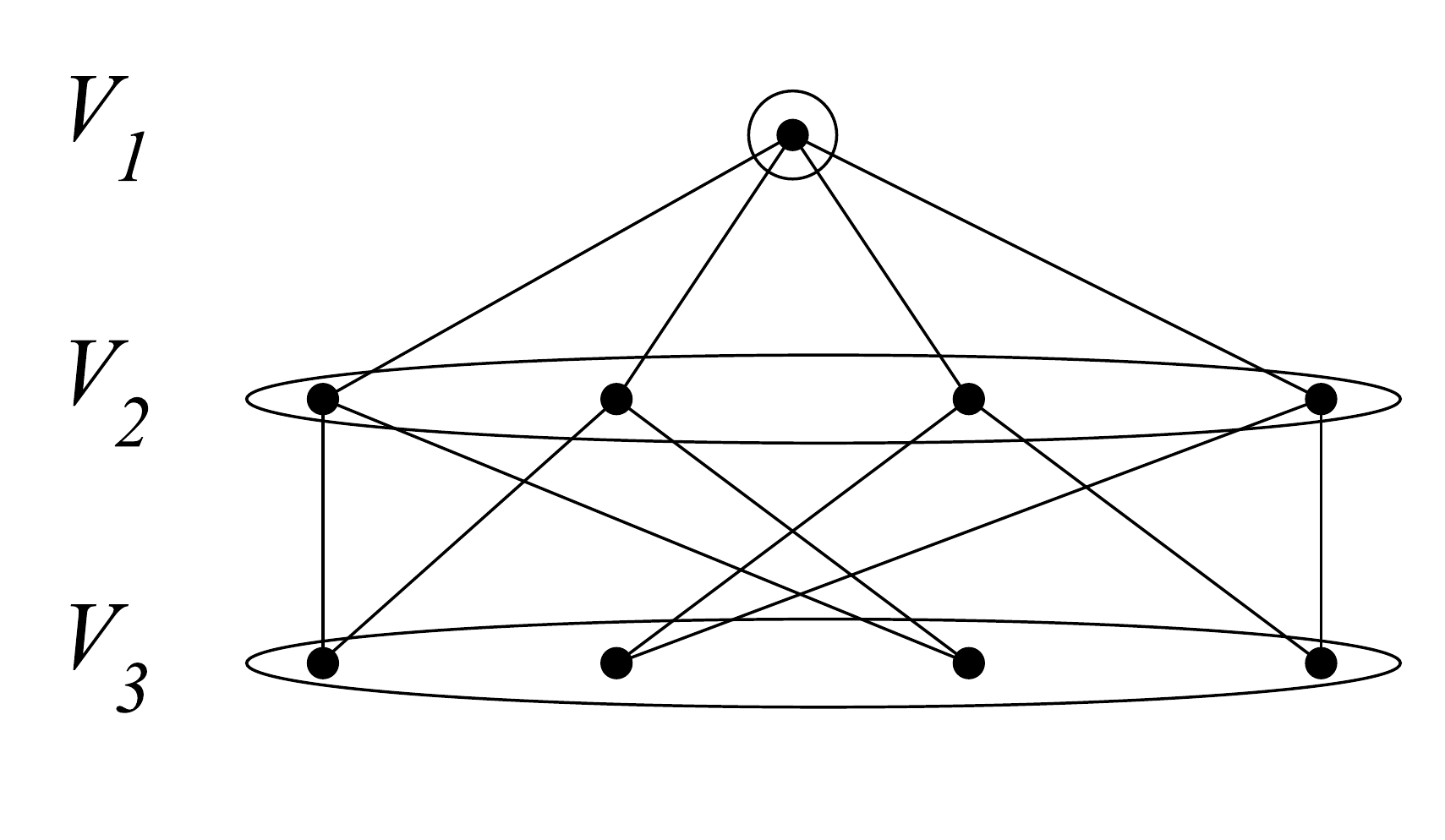}
\caption{Partition of the vertex set of a graph $G$, $\mathcal{P}=\{V_1,V_2,V_3\}$.}
\end{center}
\end{figure}

and consider its positive
eigenvector $\vecnu=(\nu_{1}\j|\nu_{2}\j|\nu_{3}\j)^{\top}$ with entries $\nu_{1}=2$, $\nu_{2}=\sqrt{2}$,
$\nu_{3}=1$ with the $\j$'s being all $1$-vectors with appropriate
lengths, depending on $|V_{i}|$, $1\leq i\leq 3$.

As $\mathcal{P}$ defines a regular partition, the intersection numbers are just
$b_{ij}:=|\G (u)\cap V_{j}|$, where $u\in V_{i}$, $1\leq i,j\leq
3$. Thus, it follows that $b_{12}=4$, $b_{21}=1$, $b_{23}=2$ and
$b_{32}=2$ are the non-null intersection numbers.

Note that since $\mathcal{P}$ is a regular partition, then it is a weight-regular
partition, and using Lemma \ref{lema:implicacioregularIFFweightregular} we can calculate the corresponding non-null intersection
numbers: $b^{*}_{12}=\frac{\sqrt{2}}{2}4$,
$b^{*}_{21}=\frac{2}{\sqrt{2}}1$, $b^{*}_{23}=\frac{1}{\sqrt{2}}4$ and $b^{*}_{32}=\frac{\sqrt{2}}{1}4$.
\end{example}

It is easy to see that a regular partition is always weight-regular, as seen in the above example. The contrary, however, is not true. In \cite{FG1999} a nontrivial example of a weight-regular partition which is not regular is given. Many other examples arise, for instance, from the bipartition of any connected bipartite graph (see Table \ref{table:relations}), which is always weight-regular but does not always define a regular partition.

We will also need the following two results, which relate eigenvalue interlacing and weight-regular partitions.

Let $\A$ and $\B$ be two square matrices having only real eigenvalues $\lambda_1\geq \cdots \geq \lambda_n$ and $\mu_1\geq \cdots \geq \mu_m$, respectively ($m\leq n$). If for all $1\leq i \leq m$ we have $\lambda_{i}\geq \mu_{i}\geq \lambda_{n-m+i}$, then we say that the eigenvalues of $\B$ \emph{interlace} the eigenvalues of $\A$. The interlacing is called \emph{tight} if there exists an integer $k\in [0,\ldots,m]$ such that $\lambda_{i}= \mu_{i}$ for $1\leq i\leq k$ and $\lambda_{n-m+i}=\mu_{i}$ for $k+1\leq i\leq m$.

\begin{lema}\cite{H1995}[Interlacing]\label{theo:interlacing}
Let $\S$ be a complex $n\times m$ matrix such that $\S^{\ast}\S=\I$. Let $\A$ be a hermitian $n\times n$ matrix. Then the eigenvalues of $\S^{\ast}\A\S$ interlace the eigenvalues of $\A$.
\end{lema}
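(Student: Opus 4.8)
The plan is to derive the statement from the Courant--Fischer (min--max) characterization of the eigenvalues of a hermitian matrix, exploiting that the hypothesis $\S^{\ast}\S=\I$ makes $\S\colon\mathbb{C}^{m}\to\mathbb{C}^{n}$ an isometric embedding. Write $\B:=\S^{\ast}\A\S$; this matrix is hermitian since $\B^{\ast}=\S^{\ast}\A^{\ast}\S=\S^{\ast}\A\S=\B$, and I denote its eigenvalues by $\mu_{1}\geq\cdots\geq\mu_{m}$ and those of $\A$ by $\lambda_{1}\geq\cdots\geq\lambda_{n}$. First I would record two elementary identities: for every $\y\in\mathbb{C}^{m}$ we have $\|\S\y\|^{2}=\y^{\ast}\S^{\ast}\S\y=\|\y\|^{2}$ and $(\S\y)^{\ast}\A(\S\y)=\y^{\ast}\B\y$. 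In particular $\S$ is injective (indeed $\S\y=0$ forces $0=\S^{\ast}\S\y=\y$), hence it maps each $k$-dimensional subspace $U\subseteq\mathbb{C}^{m}$ to a $k$-dimensional subspace $\S U\subseteq\mathbb{C}^{n}$, and on $\S U$ the Rayleigh quotient $\x\mapsto\x^{\ast}\A\x/\x^{\ast}\x$ takes exactly the same values as the Rayleigh quotient $\y\mapsto\y^{\ast}\B\y/\y^{\ast}\y$ does on $U$.

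Next I would prove the upper half of the interlacing, $\mu_{i}\leq\lambda_{i}$ for $1\leq i\leq m$. Apply Courant--Fischer to $\B$ and choose an $i$-dimensional subspace $U$ with $\min_{\y\in U\setminus\{0\}}\y^{\ast}\B\y/\y^{\ast}\y=\mu_{i}$. By the remarks above, $\S U$ is $i$-dimensional and $\min_{\x\in\S U\setminus\{0\}}\x^{\ast}\A\x/\x^{\ast}\x=\mu_{i}$, so applying Courant--Fischer to $\A$ (the maximum of this minimum over all $i$-dimensional subspaces is $\lambda_{i}$) yields $\lambda_{i}\geq\mu_{i}$. For the lower half, $\mu_{i}\geq\lambda_{n-m+i}$, I would use the dual form: choose an $(m-i+1)$-dimensional subspace $U$ with $\max_{\y\in U\setminus\{0\}}\y^{\ast}\B\y/\y^{\ast}\y=\mu_{i}$. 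Then $\S U$ has dimension $m-i+1=n-(n-m+i)+1$, and since the maximum of the Rayleigh quotient of $\A$ over $\S U$ equals $\mu_{i}$, the dual Courant--Fischer formula for $\A$ gives $\lambda_{n-m+i}\leq\mu_{i}$. Chaining the two inequalities gives $\lambda_{i}\geq\mu_{i}\geq\lambda_{n-m+i}$ for all $1\leq i\leq m$, which is the asserted interlacing.

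I do not anticipate a genuine difficulty here; the only two points that need care are (i) verifying that $\S$ preserves the dimension of subspaces, which is immediate from injectivity, and (ii) the index bookkeeping in the dual min--max formula, namely that $\lambda_{k}$ equals the minimum over $(n-k+1)$-dimensional subspaces of the largest Rayleigh quotient, so that the subspace dimension relevant to $\lambda_{n-m+i}$ is precisely $m-i+1$, matching $\dim\S U$. If desired, the tightness criterion used later in the paper can be extracted by tracking the equality cases in these same min--max arguments, but it is not needed for the statement as given.
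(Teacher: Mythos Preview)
Your argument is correct: the Courant--Fischer min--max principle together with the fact that $\S$ is an isometric embedding (so it preserves Rayleigh quotients and the dimension of subspaces) yields both inequalities $\lambda_i\geq\mu_i\geq\lambda_{n-m+i}$, and your index bookkeeping in the dual form is right. Note, however, that the paper does not supply its own proof of this lemma; it is simply quoted from Haemers~\cite{H1995}, where the argument is precisely the Courant--Fischer approach you give, so there is nothing further to compare.
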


The following lemma is a direct consequence of Interlacing:

\begin{lema}\cite{F1999}\label{LemaCASNORMALITZAT}
Let $G=(V,E)$ be a graph with adjacency matrix $\A$ and positive
eigenvector $\vecnu$, and consider a vertex partition
$\mathcal{P}$ of $V$ inducing the normalized weight-quotient
matrix $\overline{\B}^{*}$. Then the following holds:

\item[$(i)$] The eigenvalues of $\overline{\B}^{*}$ interlace the
eigenvalues of $\A$.

\item[$(ii)$] If the interlacing is tight, then the partition $\mathcal{P}$
is weight-regular.
\end{lema}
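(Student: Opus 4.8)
The plan is to deduce both claims directly from the Interlacing Lemma (Lemma~\ref{theo:interlacing}); indeed, this is exactly why the statement is placed immediately after it. For $(i)$, recall that the normalized weight-characteristic matrix obeys $(\overline{\S}^{*})^{\top}\overline{\S}^{*}=\I$ and that $\A$ is real symmetric, hence Hermitian. So I would simply apply Lemma~\ref{theo:interlacing} with $\S=\overline{\S}^{*}$ to conclude that the eigenvalues of $\overline{\B}^{*}=(\overline{\S}^{*})^{\top}\A\,\overline{\S}^{*}$ interlace those of $\A$. Nothing further is needed.

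For $(ii)$, I would use the classical structural consequence of \emph{tight} interlacing due to Haemers~\cite{H1995}: if the interlacing in Lemma~\ref{theo:interlacing} is tight, then $\A\,\overline{\S}^{*}=\overline{\S}^{*}\,\overline{\B}^{*}$. (Should one wish to re-derive this: tightness forces $\overline{\S}^{*}$ to carry an eigenbasis $\{\v_k\}$ of $\overline{\B}^{*}$ to eigenvectors $\overline{\S}^{*}\v_k$ of $\A$ with the same eigenvalues --- using that $\overline{\S}^{*}$ is an isometry together with the min--max characterization of eigenvalues --- and then $\A\overline{\S}^{*}\v_k=\overline{\S}^{*}\overline{\B}^{*}\v_k$ for every $k$ yields the matrix identity.) Granting it, I would fix $i,j$ and a vertex $u\in V_i$ and compare the $(u,j)$ entries on both sides. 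The left side is $(\A\overline{\S}^{*})_{uj}=\frac{1}{||\bm{\rho} V_j||}\sum_{v\in G(u)\cap V_j}\nu_v$; the right side is $(\overline{\S}^{*}\overline{\B}^{*})_{uj}=\frac{\nu_u}{||\bm{\rho} V_i||}\,\overline{b}^{*}_{ij}$, since $\overline{s}^{*}_{uk}=0$ for $k\neq i$. Equating and dividing through by $\nu_u$ gives
$$b^{*}_{ij}(u)=\frac{1}{\nu_u}\sum_{v\in G(u)\cap V_j}\nu_v=\frac{||\bm{\rho} V_j||}{||\bm{\rho} V_i||}\,\overline{b}^{*}_{ij},$$
whose right-hand side does not depend on which $u\in V_i$ was chosen. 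Hence $b^{*}_{ij}(u)$ is constant on each $V_i$, i.e.\ $\mathcal{P}$ is weight-regular, and in fact $b^{*}_{ij}=\frac{||\bm{\rho} V_j||}{||\bm{\rho} V_i||}\overline{b}^{*}_{ij}$, consistently with~\eqref{equation:2}.

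There is essentially no obstacle: the argument is short and mechanical once the tight-interlacing identity $\A\overline{\S}^{*}=\overline{\S}^{*}\overline{\B}^{*}$ is in hand. The only point requiring care is bookkeeping the column-normalization factors $||\bm{\rho} V_i||$ when passing between $\overline{\B}^{*}$, $\widetilde{\B}^{*}$ and the weight-intersection numbers $b^{*}_{ij}(u)$, so that the entrywise comparison lines up correctly.
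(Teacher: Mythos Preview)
Your proposal is correct and matches the paper's approach: the paper does not give a detailed proof but simply states that the lemma is ``a direct consequence of Interlacing'' (Lemma~\ref{theo:interlacing}), citing \cite{F1999}. Your argument is precisely that derivation, spelling out the application of $(\overline{\S}^{*})^{\top}\overline{\S}^{*}=\I$ for part $(i)$ and the Haemers tight-interlacing identity $\A\overline{\S}^{*}=\overline{\S}^{*}\overline{\B}^{*}$ for part $(ii)$.
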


\section{Double stochastic matrices and weight-regularity}\label{sec:doublestochastic}

In this section we give a characterization of weight-regular partitions in terms of double stochastic matrices. This generalizes a result of Godsil for regular partitions \cite{G1997}.

A matrix is \emph{double stochastic} if it is nonnegative and each of its rows and each of its columns sums up to one. If $\A$ is the adjacency matrix of a graph $G$, we denote by $\Omega(\A)$ the set of all double stochastic matrices which commute with $\A$. Note that $\Omega(\A)$
is a convex polytope since it consists of all matrices $\X$ such that

$$\X\A=\A\X, \quad \X\1=\1\X, \quad \X\geq 0.$$

Note that any permutation matrix is a doubly stochastic matrix having integral entries only. For more details on double stochastic matrices, see \cite{BG1977}.

\begin{lema}\label{propo:generalizationGodsil}
Let $\A$ be the adjacency matrix of a graph $G$, and let $\mathcal{P}$ be a weight partition of the vertex set with normalized weight-characteristic matrix $\overline{\S}^\ast$. Then, $\mathcal{P}$ is weight-regular if and only if $\A$ and $\overline{\S}^\ast \overline{\S}^{\ast\top}$ commute.
\end{lema}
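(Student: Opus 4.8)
The plan is to exploit the interlacing criterion from Lemma~\ref{LemaCASNORMALITZAT} together with the spectral decomposition of the projector $\P:=\overline{\S}^\ast(\overline{\S}^\ast)^\top$. Since $(\overline{\S}^\ast)^\top\overline{\S}^\ast=\I$, the matrix $\P$ is the orthogonal projection of $\mathbb{R}^n$ onto the $m$-dimensional column space $\mathcal{U}$ of $\overline{\S}^\ast$; note $\P^2=\P=\P^\top$. First I would recall that $\A$ commutes with $\P$ if and only if $\mathcal{U}$ (and hence $\mathcal{U}^\perp$) is $\A$-invariant, and in that case the eigenvalues of $\A\!\restriction_{\mathcal{U}}$ together with those of $\A\!\restriction_{\mathcal{U}^\perp}$ form the full spectrum of $\A$. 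The key observation is that $\A\!\restriction_{\mathcal{U}}$ is similar to the normalized weight-quotient matrix $\overline{\B}^\ast=(\overline{\S}^\ast)^\top\A\,\overline{\S}^\ast$, because $\overline{\S}^\ast$ is an isometry onto $\mathcal{U}$; indeed $(\overline{\S}^\ast)^\top\A\,\overline{\S}^\ast$ is exactly the matrix of $\A\!\restriction_{\mathcal{U}}$ in the orthonormal basis given by the columns of $\overline{\S}^\ast$.

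For the forward direction, suppose $\mathcal{P}$ is weight-regular. I would show directly that $\mathcal{U}$ is $\A$-invariant: a short computation with the weight-characteristic matrix shows that $\A\,\overline{\S}^\ast=\overline{\S}^\ast\,\overline{\B}^\ast$. This is the weight-analogue of the familiar identity $\A\S=\S\B$ for equitable partitions; the entry-wise check uses the definition of $b^\ast_{ij}$ in \eqref{equation:1} and the weight-regularity hypothesis $b^\ast_{ij}(u)=b^\ast_{ij}$, after accounting for the column normalization by $\D^{-1}$. Granting $\A\,\overline{\S}^\ast=\overline{\S}^\ast\,\overline{\B}^\ast$, we get $\A\P=\A\,\overline{\S}^\ast(\overline{\S}^\ast)^\top=\overline{\S}^\ast\,\overline{\B}^\ast(\overline{\S}^\ast)^\top$, which is symmetric because $\overline{\B}^\ast$ is symmetric; since $\A\P$ is symmetric and $\P,\A$ are symmetric, $\A\P=(\A\P)^\top=\P\A$, so $\A$ and $\P$ commute.

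For the converse, suppose $\A\P=\P\A$. Then $\mathcal{U}$ is $\A$-invariant, so the eigenvalues of $\A$ split as the multiset union of those of $\A\!\restriction_{\mathcal{U}}$ and those of $\A\!\restriction_{\mathcal{U}^\perp}$. Since $\A\!\restriction_{\mathcal{U}}$ has the same eigenvalues as $\overline{\B}^\ast$, the $m$ eigenvalues of $\overline{\B}^\ast$ are a sub-multiset of $\spec\A$. Combined with part $(i)$ of Lemma~\ref{LemaCASNORMALITZAT} — the eigenvalues of $\overline{\B}^\ast$ interlace those of $\A$ — being a sub-multiset of $\spec\A$ forces the interlacing to be tight in the sense defined above: if $\mu_1\geq\cdots\geq\mu_m$ are the eigenvalues of $\overline{\B}^\ast$ and each $\mu_i\in\{\lambda_1,\dots,\lambda_n\}$ with $\lambda_i\geq\mu_i\geq\lambda_{n-m+i}$, then one checks that there is a threshold $k$ with $\mu_i=\lambda_i$ for $i\leq k$ and $\mu_i=\lambda_{n-m+i}$ for $i>k$. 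Then part $(ii)$ of Lemma~\ref{LemaCASNORMALITZAT} yields that $\mathcal{P}$ is weight-regular, completing the proof.

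The step I expect to be the main obstacle is the small combinatorial argument that ``sub-multiset of the spectrum'' $+$ ``interlacing'' $\Rightarrow$ ``tight interlacing''; it is elementary but needs care about multiplicities (one wants to argue that no $\mu_i$ can sit strictly between $\lambda_i$ and $\lambda_{n-m+i}$, and then that the indices where $\mu_i=\lambda_i$ form an initial segment). The identity $\A\,\overline{\S}^\ast=\overline{\S}^\ast\,\overline{\B}^\ast$ in the forward direction is a routine but slightly fiddly normalization bookkeeping exercise with $\D$; everything else is linear-algebra manipulation with the projector $\P$.
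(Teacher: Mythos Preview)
Your forward direction is essentially the paper's: both hinge on the identity $\A\,\overline{\S}^\ast=\overline{\S}^\ast\,\overline{\B}^\ast$ (the paper imports it from \cite{F1999}, you verify it entrywise) and then observe that $\A\P=\overline{\S}^\ast\,\overline{\B}^\ast(\overline{\S}^\ast)^\top$ is symmetric.

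The converse, however, has a genuine gap. The implication ``sub-multiset of $\spec\A$'' $+$ ``interlacing'' $\Rightarrow$ ``tight interlacing'' is \emph{false}. Take $n=5$, $m=3$, $\spec\A=\{5,4,3,2,1\}$ and $\mu_1=5$, $\mu_2=3$, $\mu_3=2$. These are a sub-multiset of $\spec\A$, and interlacing holds ($\lambda_i\ge\mu_i\ge\lambda_{i+2}$ for $i=1,2,3$), yet no threshold $k$ works: $k\ge 2$ forces $\mu_2=\lambda_2=4$, while $k\le 1$ forces $\mu_2=\lambda_4=2$. So the step you flagged as ``the main obstacle'' is not just fiddly bookkeeping --- it is simply not true at the level of generality you are using it, and nothing in your argument invokes enough structure of $\overline{\S}^\ast$ to rule such configurations out.

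The paper avoids this by computing $\A\P$ and $\P\A$ entrywise and equating them to obtain $\|\vecrho V_i\|^2\,b^\ast_{ij}(u)=\|\vecrho V_j\|^2\,b^\ast_{ji}(v)$ for all $u\in V_i$, $v\in V_j$, which immediately forces $b^\ast_{ij}(u)$ to be independent of $u$. There is also a one-line fix in the spirit of your own setup: from $\A\P=\P\A$ right-multiply by $\overline{\S}^\ast$ and use $\P\,\overline{\S}^\ast=\overline{\S}^\ast$ to get $\A\,\overline{\S}^\ast=\overline{\S}^\ast\bigl((\overline{\S}^\ast)^\top\A\,\overline{\S}^\ast\bigr)=\overline{\S}^\ast\,\overline{\B}^\ast$, which is precisely the characterization of weight-regularity from \cite[Lemma~2.2]{F1999}. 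In other words, commutation already gives you $\A$-invariance of the column space directly --- detouring through tight interlacing (which is how one usually \emph{deduces} that invariance) is both unnecessary and, as written, incorrect.
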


\begin{proof}
Using Lemma 2.2 from \cite{F1999} we know that $\mathcal{P}$ is weight-regular
if and only if there exist a $m\times m$ matrix $\C$ such that
\begin{equation}\label{pseudo-regtool}
\overline{\S}^\ast \C=\A\overline{\S}^\ast
\end{equation}

%if and only if there exist a $m\times m$ normalized weight-quotient matrix $\overline{\B}^\ast$ such that
%\begin{equation}\label{pseudo-regtool}
%\overline{\S}^\ast \overline{\B}^\ast=\A\overline{\S}^\ast
%\end{equation}
where $\overline{\S}^\ast$ is the normalized weight-characteristic matrix of $\mathcal{P}$. Moreover, in this case $\C=\overline{\B}^{\ast}$, the normalized weight-quotient matrix. Equation (\ref{pseudo-regtool}) implies that

$$\overline{\B}^\ast=\overline{\S}^{\ast\top} \A\overline{\S}^\ast,$$

hence $\overline{\B}^*$ is symmetric. Using equation (\ref{pseudo-regtool}) again, we obtain

$$\A\overline{\S}^\ast \overline{\S}^{\ast\top}=\overline{\S}^\ast \overline{\B}^\ast \overline{\S}^{\ast\top}$$

which implies that $\A\overline{\S}^\ast \overline{\S}^{\ast\top}$ is symmetric. Then, it follows that $\A$ and $\overline{\S}^\ast \overline{\S}^{\ast\top}$ commute.

Conversely, assume that $\A$ and $\overline{\S}^\ast \overline{\S}^{\ast\top}$ commute, that is, $\A\overline{\S}^\ast \overline{\S}^{\ast\top}=\overline{\S}^\ast \overline{\S}^{\ast\top}\A$. We know that
$$\overline{s}^{*}_{uj}=\left\{
\begin{array}{cc}
\frac{\nu_{u}}{\|\rho V_j\|} & \mbox{if } u\in V_{j},\\
0 & \mbox{otherwise.}
\end{array}
\right.$$
Then,
$$\left[\overline{\S}^\ast \overline{\S}^{\ast\top}\right]_{uv}=\sum_{j=1}^{m}\overline{s}^{*}_{uj}\overline{s}^{*}_{vj}=\left\{
\begin{array}{cc}
\frac{\nu_{u}\nu_{v}}{\|\rho V_j\|^2} & \mbox{if } u,v\in V_{j},\\
0 & \mbox{otherwise.}
\end{array}
\right.$$
Hence, if $u\in V_i$ and $v\in V_j$,
\begin{equation}\label{eq:1}
\left[\A\overline{\S}^\ast \overline{\S}^{\ast\top}\right]_{uv}=\sum_{w\in W}a_{uw}\left(\overline{\S}^\ast \overline{\S}^{\ast\top} \right)_{wv}=\sum_{w\in G(u)\cap V_j}\frac{\nu_w\nu_v}{\|\rho V_j\|^2},
\end{equation}
\begin{equation}\label{eq:2}
\left[\overline{\S}^\ast \overline{\S}^{\ast\top}\A\right]_{uv}=\sum_{w\in W}\left(\overline{\S}^\ast \overline{\S}^{\ast\top} \right)_{uw}a_{wv}=\sum_{w\in G(v)\cap V_i}\frac{\nu_u\nu_w}{\|\rho V_i\|^2}.
\end{equation}
Setting equality in equations (\ref{eq:1}) and (\ref{eq:2}) we obtain
$$\|\rho V_i\|^2 \left(\frac{1}{\nu_u}\sum_{w\in G(u)\cap V_j}\nu_w \right)= \|\rho V_j\|^2 \left(\frac{1}{\nu_v}\sum_{w\in G(v)\cap V_i}\nu_w \right),$$
which is equivalent to
\begin{equation}\label{eq:3}
\|\rho V_i\|^2 b_{ij}^{\ast}(u) = \|\rho V_j\|^2 b_{ji}^{\ast}(v).
\end{equation}
If we fix $v\in V_j$, then it follows that for each $u\in V_i$ the weight intersection number $b_{ij}^{\ast}(u)$ is just a constant, and hence $\mathcal{P}$ is a weight-regular partition.

\end{proof}

The above result yields to the following corollary.
\begin{coro}
Let $\mathcal{P}$ be a weight partition of the vertex set of a graph with normalized weight-characteristic matrix $\overline{\S}^\ast$. Then $\mathcal{P}$ is weight-regular if and only if $\overline{\S}^\ast \overline{\S}^{\ast\top}\in\Omega(\A)$.
\end{coro}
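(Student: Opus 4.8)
The plan is to derive the corollary directly from Lemma~\ref{propo:generalizationGodsil} by showing that the matrix $\overline{\S}^\ast \overline{\S}^{\ast\top}$ is \emph{always} doubly stochastic, so that the only extra condition needed to land in $\Omega(\A)$ is commutation with $\A$, which by the lemma is exactly weight-regularity.

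First I would record the explicit entry formula already computed in the proof of Lemma~\ref{propo:generalizationGodsil}: if $u,v$ lie in the same part $V_j$ then $\left[\overline{\S}^\ast \overline{\S}^{\ast\top}\right]_{uv}=\frac{\nu_u\nu_v}{\|\rho V_j\|^2}$, and the entry is $0$ otherwise. From this, nonnegativity is immediate since the $\nu_u$ are positive. For the row sums, fix $u\in V_j$; then
\begin{equation*}
\sum_{v\in V}\left[\overline{\S}^\ast \overline{\S}^{\ast\top}\right]_{uv}=\sum_{v\in V_j}\frac{\nu_u\nu_v}{\|\rho V_j\|^2}=\frac{\nu_u}{\|\rho V_j\|^2}\sum_{v\in V_j}\nu_v.
\end{equation*}
Here one has to be slightly careful: $\|\rho V_j\|^2=\sum_{v\in V_j}\nu_v^2$, not $\left(\sum_{v\in V_j}\nu_v\right)^2$, so the row sum is \emph{not} obviously $1$ from this computation alone. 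The clean way is instead to use $(\overline{\S}^\ast)^{\top}\overline{\S}^\ast=\I$ and matrix manipulation: $\overline{\S}^\ast \overline{\S}^{\ast\top}$ is the orthogonal projector onto the column space of $\overline{\S}^\ast$, and that column space contains $\vecnu$ (since the restriction of $\vecnu$ to $V_j$, normalized, is precisely the $j$-th column of $\overline{\S}^\ast$ scaled). Hence $\overline{\S}^\ast \overline{\S}^{\ast\top}\vecnu=\vecnu$, i.e. $\sum_{v}\left[\overline{\S}^\ast \overline{\S}^{\ast\top}\right]_{uv}\nu_v=\nu_u$ for all $u$ — which is the correct "weighted" statement, and indeed combining it with the entry formula shows the naive row sum above equals $\nu_u^2/\|\rho V_j\|^2\cdot$(something), so I should double-check which normalization of "doubly stochastic" the paper intends.

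The main obstacle is exactly this normalization subtlety: with the eigenvector $\vecnu$ normalized so its minimum entry is $1$ (not so that $\sum\nu_v=1$ on each part), the matrix $\overline{\S}^\ast \overline{\S}^{\ast\top}$ has ordinary row/column sums $\frac{\nu_u}{\|\rho V_j\|^2}\sum_{v\in V_j}\nu_v$, which in general is not $1$. So either the corollary is using a rescaled notion, or — more likely — the intended reading is that $\overline{\S}^\ast \overline{\S}^{\ast\top}$ satisfies the defining relations of $\Omega(\A)$ as written in Section~\ref{sec:doublestochastic}, namely $\X\A=\A\X$, $\X\1=\1\X$ interpreted with the Perron vector in place of $\1$ (consistent with the "weight-regularization" philosophy of the whole paper). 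Once that reading is fixed, the proof is a one-liner: $\overline{\S}^\ast \overline{\S}^{\ast\top}$ is symmetric, nonnegative, fixes $\vecnu$ (hence the "stochastic" conditions hold in the weighted sense) for \emph{any} weight partition, and it commutes with $\A$ precisely when $\mathcal{P}$ is weight-regular by Lemma~\ref{propo:generalizationGodsil}; therefore $\overline{\S}^\ast \overline{\S}^{\ast\top}\in\Omega(\A)$ if and only if $\mathcal{P}$ is weight-regular, with the two other defining properties being automatic. I would present it in exactly that order: (1) cite the entry formula and note symmetry and nonnegativity, (2) show $\overline{\S}^\ast \overline{\S}^{\ast\top}\vecnu=\vecnu$ using $(\overline{\S}^\ast)^\top\overline{\S}^\ast=\I$, (3) invoke Lemma~\ref{propo:generalizationGodsil} for the commutation equivalence, and (4) conclude.
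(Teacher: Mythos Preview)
Your overall strategy --- show that $\overline{\S}^\ast \overline{\S}^{\ast\top}$ automatically satisfies the nonnegativity and ``stochastic'' conditions, so that membership in $\Omega(\A)$ reduces to commutation with $\A$, and then invoke Lemma~\ref{propo:generalizationGodsil} --- is exactly what the paper intends; the paper gives no separate argument and simply declares the corollary to follow from the lemma.

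More interestingly, you have put your finger on a genuine wrinkle that the paper glosses over. Your computation is correct: for $u\in V_j$ the ordinary row sum of $\overline{\S}^\ast \overline{\S}^{\ast\top}$ is $\nu_u\bigl(\sum_{v\in V_j}\nu_v\bigr)/\sum_{v\in V_j}\nu_v^2$, and this equals $1$ for every $u\in V_j$ if and only if $\nu$ is constant on $V_j$. So under the literal definition of ``doubly stochastic'' given in Section~\ref{sec:doublestochastic}, the matrix $\overline{\S}^\ast \overline{\S}^{\ast\top}$ is \emph{not} doubly stochastic for a general weight partition, and the corollary as stated does not follow from the lemma alone. Your proposed resolution --- read the condition $\X\1=\1$ as $\X\vecnu=\vecnu$, in keeping with the weight-regularization philosophy of the paper --- is the natural fix, and your verification that $\overline{\S}^\ast \overline{\S}^{\ast\top}\vecnu=\vecnu$ via $(\overline{\S}^\ast)^\top\overline{\S}^\ast=\I$ and $\vecnu\in\mathrm{col}(\overline{\S}^\ast)$ is clean and correct. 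An alternative reading is that the corollary is implicitly restricted to partitions on which $\vecnu$ is constant on each part (this hypothesis appears explicitly in Lemma~\ref{lema:implicacioregularIFFweightregular} and Theorem~\ref{propo:Aida2}), in which case ordinary double stochasticity does hold. Either way, your write-up should flag the issue and state clearly which reading you adopt; the paper itself does not.
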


%%%%%%%%%%%%%%%%%%%%%%%%%%%%%%%%%%%%%%%%%%%%%%%%
\section{Polynomials and weight-regularity} \label{sec:polys}

In \cite{H1963}, Hoffman proved that a (connected) graph $G$ is regular if and only if $H(\A)=\J$, in which case $H$ becomes the Hoffman polynomial. An analogous of Hoffman's result for biregular graphs was given in \cite{ADF2013}. In \cite{F2002,LW2012} a generalization of Hoffman's characterization for nonregular graphs is given.

The following result proves a natural extension of Hoffman's result for weight-regular partitions of a graph.
%%%%%%%%%%%%%%%%%%%%%%%%%%%%%%%%%%%%%%%%%%%%%% COMMENTED DUE TO REVISION LAA %%%%%%%%%%%%%%%%%%%%%%%%%%%%%%%%%%%%%%%%%%%%%%%%%%%%%%%%%
%Recall that $b^{*}_{ij}$ denote the entries of the weight-quotient matrix defined in Section~\ref{sec:weightquotient}.
%%%%%%%%%%%%%%%%%%%%%%%%%%%%%%%%%%%%%%%%%%%%%% COMMENTED DUE TO REVISION LAA %%%%%%%%%%%%%%%%%%%%%%%%%%%%%%%%%%%%%%%%%%%%%%%%%%%%%%%%%

\begin{theo} \label{propo:Aida2} Let $G$ be a connected graph with a partition of its vertices into $m$ sets, $\mathcal{P}=\{V_{1},\ldots,V_{m}\}$, such that $n=n_{1}+\cdots+n_{m}$ and such that the map on $V$, denoted by $\bm{\rho}: u \rightarrow \nu_u$, is constant over each $V_k$. Then there exists a polynomial $H\in \mathbb{R}_{d}[x]$ such that
%%%%%%%%%%%%%%%%%%%%%%%%%%%%%%%%%%%%%%%%%%%%%% COMMENTED DUE TO REVISION LAA %%%%%%%%%%%%%%%%%%%%%%%%%%%%%%%%%%%%%%%%%%%%%%%%%%%%%%%%%
%\begin{equation}\label{eq:equation1}
%H(\A)=\left( \begin{array}{cccc}
%b^{*}_{11}\J & b^{*}_{12}\J  & \cdots & b^{*}_{1m}\J \\
%b^{*}_{21}\J & b^{*}_{22}\J  & \cdots & b^{*}_{2m}\J \\
%\vdots &   & \ddots &  \\
%b^{*}_{m1}\J & b^{*}_{m2}\J  & \cdots & b^{*}_{mm}\J
% \end{array} \right)
%\end{equation}
%%%%%%%%%%%%%%%%%%%%%%%%%%%%%%%%%%%%%%%%%%%%%% COMMENTED DUE TO REVISION LAA %%%%%%%%%%%%%%%%%%%%%%%%%%%%%%%%%%%%%%%%%%%%%%%%%%%%%%%%%
\begin{equation}\label{eq:equation1}
H(\A)=\left( \begin{array}{cccc}
\nu_{1}\nu_{1}\J & \nu_{1}\nu_{2}\J  & \cdots & \nu_{1}\nu_{m}\J \\
\nu_{2}\nu_{1}\J & \nu_{2}\nu_{2}\J  & \cdots & \nu_{2}\nu_{m}\J \\
\vdots &   & \ddots &  \\
\nu_{m}\nu_{1}\J & \nu_{m}\nu_{2}\J  & \cdots & \nu_{m}\nu_{m}\J
 \end{array} \right)
\end{equation}

if and only if $\mathcal{P}$ is a weight-regular partition of $G$.

\end{theo}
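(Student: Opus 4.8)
The plan is to show that the matrix on the right-hand side of \eqref{eq:equation1} equals $\vecnu\,\vecnu^{\top}$ (written in block form relative to $\mathcal{P}$), and then to reduce the statement to an eigenvalue computation in the spirit of Hoffman's original argument. Concretely, under the hypothesis that $\bm{\rho}$ is constant on each $V_k$, the $(u,v)$-entry of the claimed matrix is $\nu_u\nu_v$, i.e.\ the displayed block matrix is precisely $M:=\vecnu\vecnu^{\top}$. Since $\A$ is irreducible with simple largest eigenvalue $\theta_0=\lambda_1$ and eigenvector $\vecnu$, the matrix $M$ is (up to a scalar) the orthogonal projector onto the $\theta_0$-eigenspace: $M=\|\vecnu\|^2\,E_0$ where $E_0 = \vecnu\vecnu^{\top}/\|\vecnu\|^2$. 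Hence $H(\A)=M$ for some $H\in\mathbb{R}_d[x]$ if and only if there is a polynomial with $H(\A)=\|\vecnu\|^2 E_0$, which — by the spectral decomposition $\A=\sum_{i=0}^d\theta_i E_i$ and the fact that any polynomial in $\A$ is a linear combination of the $E_i$ — holds \emph{if and only if} $M$ itself is a polynomial in $\A$, equivalently $M$ commutes with $\A$ and $M$ lies in the (commutative) algebra generated by $\A$. In fact the cleanest route is: $H$ with $H(\A)=M$ exists $\iff$ $M\in\mathbb{R}[\A]$; and since $M=\|\vecnu\|^2E_0$ and $E_0\in\mathbb{R}[\A]$ always, the existence of $H$ reduces to the \emph{identity} $\vecnu\vecnu^{\top}=\|\vecnu\|^2E_0$ holding, which is automatic. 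So the real content is not "does $H$ exist" in a vacuum but rather that the specific block structure on the right forces weight-regularity, and conversely.

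So I would restructure the equivalence as follows. ($\Leftarrow$) Suppose $\mathcal{P}$ is weight-regular with $\bm{\rho}$ constant on each $V_k$. By Lemma~\ref{lema:implicacioregularIFFweightregular} the partition is then regular, and by Lemma~2.2 of \cite{F1999} (equation \eqref{pseudo-regtool}) we have $\overline{\S}^\ast\overline{\B}^\ast=\A\overline{\S}^\ast$, so $\A$ commutes with $P:=\overline{\S}^\ast\overline{\S}^{\ast\top}$ (Lemma~\ref{propo:generalizationGodsil}). I would then observe that $P$, being a symmetric idempotent commuting with $\A$, lies in $\mathbb{R}[\A]$, hence there is a polynomial $q$ with $q(\A)=P$; and the $(u,v)$-entry of $P$ for $u,v\in V_j$ is $\nu_u\nu_v/\|\bm{\rho}V_j\|^2$ while it is $0$ across classes. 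To recover the full matrix $\vecnu\vecnu^{\top}$ (which mixes classes) I instead argue directly: since $\A$ is connected, $\vecnu\vecnu^{\top}=\|\vecnu\|^2E_0$ and $E_0=\prod_{i=1}^d(\A-\theta_i\I)/\prod_{i=1}^d(\theta_0-\theta_i)$, so simply take $H(x)=\|\vecnu\|^2\prod_{i=1}^d(x-\theta_i)/\prod_{i=1}^d(\theta_0-\theta_i)$; then $H(\A)=\|\vecnu\|^2E_0=\vecnu\vecnu^{\top}$, which is exactly the block matrix in \eqref{eq:equation1} under the hypothesis that $\nu_u$ depends only on the class of $u$. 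Note this direction actually does not even use weight-regularity beyond the constancy hypothesis — which signals that the substantive implication is the other one.

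($\Rightarrow$) Suppose $H(\A)$ equals the block matrix in \eqref{eq:equation1}, i.e.\ $H(\A)=\vecnu\vecnu^{\top}$. Since $H(\A)$ is a polynomial in $\A$, it commutes with $\A$; thus $\vecnu\vecnu^{\top}$ commutes with $\A$. Now I would reinterpret $\vecnu\vecnu^{\top}$ in terms of the normalized weight-characteristic matrix: write $\widetilde{\S}^{\ast}$ and note its columns $\bm\rho V_1,\dots,\bm\rho V_m$ satisfy $\sum_{j}\bm\rho V_j=\vecnu$, so $\vecnu\vecnu^{\top}=\big(\sum_j\bm\rho V_j\big)\big(\sum_k\bm\rho V_k\big)^{\top}=\sum_{j,k}\bm\rho V_j(\bm\rho V_k)^{\top}$. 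This is not literally $\overline{\S}^\ast\overline{\S}^{\ast\top}=\sum_j \bm\rho V_j(\bm\rho V_j)^{\top}/\|\bm\rho V_j\|^2$, so I cannot invoke Lemma~\ref{propo:generalizationGodsil} verbatim; instead I would redo the entrywise commutator computation from that proof with $M=\vecnu\vecnu^{\top}$ in place of $\overline{\S}^\ast\overline{\S}^{\ast\top}$. For $u\in V_i$, $v\in V_j$ one gets
\begin{align*}
[\A M]_{uv}&=\sum_{w\in G(u)}\nu_w\nu_v=\nu_v\nu_u\,\delta^\ast_u=\nu_u\nu_v\lambda_1,\\
[M\A]_{uv}&=\sum_{w\in G(v)}\nu_u\nu_w=\nu_u\nu_v\lambda_1,
\end{align*}
so $\vecnu\vecnu^{\top}$ \emph{always} commutes with $\A$ — which means commutation alone is too weak. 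Therefore the correct exploitation of the hypothesis must use more than "$H(\A)$ is \emph{some} polynomial equal to $\vecnu\vecnu^{\top}$"; it must use that $H$ has degree at most $d$, i.e.\ lives in $\mathbb{R}_d[x]$. The key point is: the algebra of polynomials in $\A$ has dimension $d+1$ over $\mathbb{R}$, spanned by $E_0,\dots,E_d$, so $H(\A)$ for $H\in\mathbb{R}_d[x]$ ranges over \emph{all} of $\mathbb{R}[\A]$; hence "$H\in\mathbb{R}_d[x]$ with $H(\A)=\vecnu\vecnu^{\top}$ exists" is equivalent to "$\vecnu\vecnu^{\top}\in\mathbb{R}[\A]$". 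So the real statement to prove in ($\Rightarrow$) is: if $\vecnu\vecnu^{\top}\in\mathbb{R}[\A]$ and $\bm\rho$ is constant on classes, then $\mathcal{P}$ is weight-regular. Write $\vecnu\vecnu^{\top}=\sum_i c_i E_i$; applying to $\vecnu$ gives $\|\vecnu\|^2\vecnu = c_0\vecnu$, and for $x\perp\vecnu$ gives $0=\sum_{i\ge1}c_i E_i x$ for all such $x$, forcing $c_i=0$ for $i\ge1$; so $\vecnu\vecnu^{\top}=\|\vecnu\|^2 E_0$ — but this is an identity, true always, giving no information.

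\textbf{The main obstacle}, which the above makes explicit, is that $\vecnu\vecnu^{\top}=\|\vecnu\|^2E_0$ is unconditionally a low-degree polynomial in $\A$, so the statement as literally written cannot distinguish weight-regular partitions unless the block matrix on the right-hand side of \eqref{eq:equation1} is genuinely \emph{different} from $\vecnu\vecnu^{\top}$. Resolving this requires reading \eqref{eq:equation1} correctly: the blocks are $\nu_i\nu_j\J$ where $\J$ is the all-ones block of the appropriate size, which under "$\bm\rho$ constant on $V_k$" does equal $\vecnu\vecnu^{\top}$ — so the genuine hypothesis being smuggled in is that the off-diagonal and diagonal blocks are \emph{rank-one constant blocks}, and the content of the theorem is that $H(\A)$ having this block-constant form (not just equalling $\vecnu\vecnu^{\top}$ as a matrix, which is the same thing here) is the characterization. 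I would therefore phrase the proof of ($\Rightarrow$) via the quotient: the condition $H(\A)=\vecnu\vecnu^{\top}$ with $\bm\rho$ class-constant is equivalent, after conjugating by $\overline{\S}^\ast$ and its complement, to tight interlacing of $\overline{\B}^\ast$ with $\A$ — since $H(\A)=\|\vecnu\|^2E_0$ projects onto a single eigenline and forces the "non-principal" part of $\A$ to be block-diagonal with respect to $\mathcal{P}$ — and then Lemma~\ref{LemaCASNORMALITZAT}(ii) gives weight-regularity. Making this conjugation argument precise, and in particular proving that block-constancy of $H(\A)$ forces $\A\overline{\S}^\ast = \overline{\S}^\ast\overline{\B}^\ast$ (equation \eqref{pseudo-regtool}), is where the real work lies, and is the step I expect to require the most care.
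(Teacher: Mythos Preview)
Your forward direction is essentially the paper's: both take $H(x)=\dfrac{\|\vecnu\|^{2}}{\pi_{0}}\prod_{i=1}^{d}(x-\theta_{i})$ and use the spectral decomposition to get $H(\A)=\|\vecnu\|^{2}E_{0}=\vecnu\vecnu^{\top}$, which under the class-constancy hypothesis is exactly the block matrix in \eqref{eq:equation1}. You are also right that this direction uses only the constancy hypothesis, not weight-regularity itself.

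Your analysis of the reverse direction is more penetrating than the paper's, and the obstacle you isolate is genuine. Under the standing hypothesis that $\nu_{u}$ is constant on each $V_{k}$, the right-hand side of \eqref{eq:equation1} is identically $\vecnu\vecnu^{\top}=\|\vecnu\|^{2}E_{0}$, and this matrix is \emph{always} a polynomial of degree $d$ in $\A$, for \emph{any} such partition. Hence the existence of $H$ with $H(\A)$ equal to that block matrix carries no information about $\mathcal{P}$. Concretely: take any connected regular graph, so $\vecnu=\j$, and any non-equitable partition of its vertex set (e.g.\ $V_{1}=\{1,2\}$, $V_{2}=\{3,4,5\}$ in $C_{5}$); then $\nu$ is trivially class-constant, the Hoffman polynomial gives $H(\A)=\J$, yet $\mathcal{P}$ is not (weight-)regular. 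The paper's converse argument asserts that ``by \eqref{eq:equation1} it follows that each block $\A_{ij}$ has constant row (and column) sum,'' but this is exactly the step that does not follow---as your commutator computation shows, $\A\,\vecnu\vecnu^{\top}=\vecnu\vecnu^{\top}\A$ is just the eigenvector equation and holds unconditionally. Your proposed salvage via tight interlacing cannot succeed either, for the same reason: the hypothesis is too weak to force \eqref{pseudo-regtool}. In short, the gap you flagged is not a gap in your write-up but in the statement itself; there is nothing further to prove here without strengthening the hypotheses.
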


\begin{proof}
Assume that $G$ has a weight-regular partition of its vertices.
Let $\A$ be the adjacency matrix of $G$.
% and $\B^{*}$ its weight-regular quotient matrix.
By Perron-Frobenius Theorem we
know that the maximum eigenvalue $\theta_{0}$ of $\A$ has
algebraic and geometric multiplicity one, and also that there
is an eigenvector $\vecnu$ belonging to $\theta_{0}$ with all
coordinates positive.
%Note that $\ev \B^{*} \subseteq \ev \A$.
In a weight-regular partition, this eigenvector is
$\vecnu=(\nu_{1}\j|\ldots|\nu_{m}\j)^{\top}$, with the $\j$'s
being the all-ones (or all-1) vectors with appropriate lengths, depending on
the size of $n_{i}$, $i=1,\ldots,m$.
This leads to a partition of $\A$ with quotient matrix

%$$\B^{*}=\left( \begin{array}{cccc}
%b^{*}_{11} & b^{*}_{12} & \cdots & b^{*}_{1m} \\
%b^{*}_{21} & b^{*}_{22} & \cdots & b^{*}_{2m} \\
%\vdots &   & \ddots &  \\
%b^{*}_{m1} & b^{*}_{m2}  & \cdots & b^{*}_{mm}
% \end{array} \right).$$

$$\left( \begin{array}{cccc}
\nu_{1}\nu_{1} & \nu_{1}\nu_{2}  & \cdots & \nu_{1}\nu_{m} \\
\nu_{2}\nu_{1} & \nu_{2}\nu_{2}  & \cdots & \nu_{2}\nu_{m} \\
\vdots &   & \ddots &  \\
\nu_{m}\nu_{1} & \nu_{m}\nu_{2}  & \cdots & \nu_{m}\nu_{m}
 \end{array} \right).$$

 Now we will use a polynomial which, for non-regular graphs, plays a similar role as the well-known Hoffman polynomial. The \emph{weight-Hoffman polynomial}, which is first used in \cite{F2002} (where is called \emph{Hoffman-like polynomial}), satisfies $(H(\A))_{uv}=\nu_u\nu_v$. This property can be deduced from Proposition 2.5 in \cite{FGY1996}. However, for completeness, we provide an alternative proof by using the spectral decomposition theorem.

By the spectral decomposition theorem we can write
$\A=\sum_{i=0}^{d}\theta_{i}\E_i=\theta_{0}\E_0+\cdots+\theta_{d}\E_d$.
We have that the weight-Hoffman polynomial can be computed as
$H=\alpha\prod_{l=1}^{d}(x-\theta_{i})$ for some non-zero
constant $\alpha$. Using the fact that
$p(\A)=\sum_{i=0}^{d}p(\theta_{i})\E_i$ for any polynomial
$p\in \mathbb{R}_{d}[x]$, then

$$H(\A)=H(\theta_{0})\E_0+H(\theta_{1})\E_1+\cdots+H(\theta_{d})\E_d=H(\theta_{0})\E_0,$$

where
$H(\theta_{0})=\alpha\prod_{l=1}^{d}(\theta_{0}-\theta_{i})=\alpha\pi_{0}$. Thus, it only remains to find the idempotent $\E_0$, which can be calculated as follows:

\begin{align*}
\E_0&=\frac{1}{||\vecnu ||^{2}}\vecnu \vecnu^{T}=(\nu_{1}\j|\cdots|\nu_{m}\j)(\nu_{1}\j|\cdots|\nu_{m}\j)^{T}\\
&= \frac{1}{||\vecnu ||^{2}}\left( \begin{array}{ccc}
\nu_{1} \nu_{1}\J & \cdots & \nu_{1} \nu_{m}\J\\
\vdots &  \ddots  & \vdots\\
\nu_{m}\nu_{1}\J & \cdots & \nu_{m}\nu_{m}\J
\end{array} \right)
\end{align*}

where the $\J$'s are all-ones (or all-1) matrices with appropriate sizes. If
%we denote $b^{*}_{ij}=\nu_{i}\nu_{j}$ for $i,j=1,\ldots,m$, and
we consider that $\alpha=\frac{||\vecnu||^{2}}{\pi_{0}}$, it
follows that

\begin{align*}
H(\A)=\left( \begin{array}{cccc}
\nu_{1}\nu_{1}\J & \nu_{1}\nu_{2}\J  & \cdots & \nu_{1}\nu_{m}\J \\
\nu_{2}\nu_{1}\J & \nu_{2}\nu_{2}\J  & \cdots & \nu_{2}\nu_{m}\J \\
\vdots &   & \ddots &  \\
\nu_{m}\nu_{1}\J & \nu_{m}\nu_{2}\J  & \cdots & \nu_{m}\nu_{m}\J
 \end{array} \right).
\end{align*}

Conversely, suppose that (\ref{eq:equation1}) is valid. We know that rows and columns of $\A$ are partitioned according to $\mathcal{P}$ as follows

\begin{equation*}
\A=\left[ \begin{array}{ccc}
\A_{11} & \cdots & \A_{1m} \\
\vdots &    & \vdots \\
\A_{m1} &  \cdots & \A_{mm}
 \end{array} \right].
\end{equation*}

 Then, by (\ref{eq:equation1}) it follows that each block $\A_{ij}$ has constant row (and column) sum ($i,j=1,\ldots,m$). Since by assumption $\bm{\rho}: u \rightarrow \nu_u$ is constant over each $V_k$ (say $\nu_k$) and we know that $b^{*}_{ij}(u):=\frac{1}{\nu_{u}}\sum_{v \in G(u)\cap
V_{j}}\nu_{v}$, it follows that $\mathcal{P}$ is a weight-regular partition.
\end{proof}

 It is worth noting that the condition on the map on $V$ ($\bm{\rho}: u \rightarrow \nu_u$ is constant over each $V_k$) is necessary to obtain an if and only if characterization, otherwise only the left direction would hold.

Observe that weight-regular partitions maintain the structure of the Perron eigenvector $\vecnu$ corresponding to the largest eigenvalue $\lambda_1$. Also, recall that regularity happens when all vector components are $1$. As a corollary to Theorem \ref{propo:Aida2} we obtain Hoffman's result \cite{H1963} (take $m=n$ and recall that for a regular graph $\vecnu=\j$).
\begin{coro}
$G$ is a regular connected graph if and only if $H(\A)=\J$.
\end{coro}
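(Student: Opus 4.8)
The plan is to derive this Corollary directly from Theorem \ref{propo:Aida2} by specializing to the finest partition. First I would take $m=n$, so that $\mathcal{P}=\{V_1,\ldots,V_n\}$ with each $V_k=\{k\}$ a singleton. For this partition the condition that $\bm{\rho}:u\mapsto\nu_u$ is constant over each part is automatic, since each part has a single vertex. Hence Theorem \ref{propo:Aida2} applies and tells us that there is a polynomial $H\in\mathbb{R}_d[x]$ with $H(\A)$ equal to the $n\times n$ matrix whose $(u,v)$ entry is $\nu_u\nu_v$ (each block being a $1\times 1$ all-ones matrix) if and only if $\mathcal{P}$ is weight-regular.

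Next I would observe, as recorded in Table \ref{table:relations}, that the singleton partition ($m=n$) is always a \emph{regular} partition of any graph; and by Lemma \ref{lema:implicacioregularIFFweightregular}, for the singleton partition weight-regularity is equivalent to regularity of $G$ (the map $\bm{\rho}$ being automatically constant on each singleton part). So for $m=n$, the hypothesis ``$\mathcal{P}$ is weight-regular'' in Theorem \ref{propo:Aida2} is equivalent to ``$G$ is a regular graph.''

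Then I would match the two normalizations. When $G$ is regular we have $\vecnu=\j$, so $\nu_u=1$ for all $u$, and the matrix on the right-hand side of \eqref{eq:equation1} becomes exactly $\J$. Conversely, if $H(\A)=\J$ for some polynomial $H$, then $H(\A)$ has the form of the right-hand side of \eqref{eq:equation1} with all $\nu_u=1$, so by the (converse direction of the) theorem the singleton partition is weight-regular, hence $G$ is regular. Combining both directions gives: $G$ is a regular connected graph if and only if $H(\A)=\J$, which is precisely Hoffman's statement, and one notes $H$ is then the Hoffman polynomial.

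The only real subtlety — and the step I would be most careful about — is the normalization of $\vecnu$. Theorem \ref{propo:Aida2} is stated with $\vecnu$ normalized so that its minimum entry is $1$; for a regular graph this forces $\vecnu=\j$ exactly, so the right-hand side of \eqref{eq:equation1} is genuinely $\J$ and not some scalar multiple of it. Conversely, if $H(\A)=\J$ one must check the forward implication of the theorem only requires $\bm{\rho}$ constant on parts (true for singletons) to conclude weight-regularity from the block-constant-row-sum argument; there is no circularity. Beyond that caveat the argument is a direct substitution of $m=n$ and $\vecnu=\j$, so there is no computational obstacle to speak of.
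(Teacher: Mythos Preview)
Your approach—specialize Theorem~\ref{propo:Aida2} to the singleton partition $m=n$—is exactly what the paper does; it derives the corollary in one line by ``take $m=n$ and recall that for a regular graph $\vecnu=\j$.'' Your forward direction is fine: if $G$ is regular then $\vecnu=\j$, the right-hand side of \eqref{eq:equation1} collapses to $\J$, and the theorem supplies the polynomial $H$.

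The converse, however, has a real gap. You argue that $H(\A)=\J$ matches the form of \eqref{eq:equation1} ``with all $\nu_u=1$,'' so the theorem's converse gives that the singleton partition is weight-regular, ``hence $G$ is regular.'' But the singleton partition is \emph{always} weight-regular, for any connected graph: each class contains a single vertex, so the weight-intersection numbers $b^*_{ij}(u)$ are vacuously independent of $u\in V_i$. Your appeal to Lemma~\ref{lema:implicacioregularIFFweightregular} conflates ``the partition $\mathcal{P}$ is regular'' with ``the graph $G$ is regular''; the lemma only relates regularity and weight-regularity \emph{of the partition}, and for $m=n$ both hold automatically. So the implication ``singleton partition weight-regular $\Rightarrow$ $G$ regular'' is false, and with it your converse argument.

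What actually carries the converse is the spectral-decomposition content inside the proof of Theorem~\ref{propo:Aida2}, not its statement. If $H(\A)=\J$ for some polynomial $H$, then $\J=\sum_{i=0}^{d}H(\theta_i)\E_i$; since $\J$ has rank one, exactly one $H(\theta_i)$ is nonzero and the corresponding idempotent must be $\frac{1}{n}\j\j^{\top}$. Hence $\j$ is an eigenvector of $\A$, which forces $G$ to be regular and therefore $\vecnu=\j$. (Equivalently and more quickly: $\A\J=\A H(\A)=H(\A)\A=\J\A$ gives $\A\j=d\,\j$ for a constant $d$.) Once you replace the faulty ``weight-regular $\Rightarrow$ $G$ regular'' step by this observation, your derivation matches the paper's intended one-line specialization.
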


%%%%%%%%%%%%%%%%%%%%%%%%%%%%%%%%%%%%%%%%%%%%%%%%
\section{Chromatic number and weight-regularity} \label{sec:chromatic}
%%%%%%%%%%%%%%%%%%%%%%%%%%%%%%%%%%%%%%%%%%%%%%%%
The aim of this section is to show that weight-regular partitions can be used to improve the well-known Hoffman's bound for the chromatic number of a graph.

A \emph{proper coloring} of $G$ is a partition of the vertex set of $G$ into cocliques (i.e., independent sets of vertices). Such cocliques are called
\emph{color classes}. The \emph{chromatic number} $\chi(G)$ of $G$ is the minimum number of color classes in a proper coloring.

For general graphs, Hoffman \cite{H1970} proved the following well-known lower bound for the chromatic number, which only involves the maximum and minimum eigenvalues of the adjacency matrix:
\begin{theo}\cite{H1970} If $G$ has at least one edge, then
$$\chi(G)\geq 1-\frac{\lambda_1}{\lambda_n}.$$
\end{theo}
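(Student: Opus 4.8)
The plan is to realize $\chi(G)=:k$ by a proper coloring, apply eigenvalue interlacing (Lemma~\ref{theo:interlacing}) to the normalized weight-quotient matrix of the coloring partition, and then combine the trace of that quotient matrix with the observation that the Perron eigenvalue $\lambda_1$ survives in the quotient. So let $\mathcal{P}=\{V_1,\ldots,V_k\}$ be a partition of $V$ into color classes (independent sets), let $\overline{\S}^{*}$ be its normalized weight-characteristic matrix, so that $(\overline{\S}^{*})^{\top}\overline{\S}^{*}=\I$, and set $\overline{\B}^{*}=(\overline{\S}^{*})^{\top}\A\overline{\S}^{*}$ with eigenvalues $\mu_1\geq\cdots\geq\mu_k$. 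By Lemma~\ref{theo:interlacing} these interlace $\lambda_1\geq\cdots\geq\lambda_n$; in particular $\mu_1\leq\lambda_1$ and $\mu_k\geq\lambda_n$.

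The first key step is to show $\mu_1=\lambda_1$. This is where the weight-characteristic matrix (rather than the ordinary $0/1$ characteristic matrix) is used: the $i$-th column of $\overline{\S}^{*}$ is $\bm{\rho}V_i/\|\bm{\rho}V_i\|$, and $\vecnu=\sum_{i=1}^{k}\bm{\rho}V_i$, so $\vecnu$ lies in the column space of $\overline{\S}^{*}$. Writing $\vecnu=\overline{\S}^{*}\w$, one computes $\overline{\B}^{*}\w=(\overline{\S}^{*})^{\top}\A\vecnu=\lambda_1(\overline{\S}^{*})^{\top}\vecnu=\lambda_1\w$, using $(\overline{\S}^{*})^{\top}\overline{\S}^{*}=\I$. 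Hence $\lambda_1$ is an eigenvalue of $\overline{\B}^{*}$, and combined with $\mu_1\leq\lambda_1$ this forces $\mu_1=\lambda_1$.

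The second key step is $\tr\overline{\B}^{*}=0$. The diagonal entries are $\overline{b}^{*}_{ii}=\widetilde{b}^{*}_{ii}/\|\bm{\rho}V_i\|^{2}$, and $\widetilde{b}^{*}_{ii}=\sum_{uv\in E(V_i,V_i)}\nu_u\nu_v=0$ since each $V_i$ is independent. Therefore
$$0=\tr\overline{\B}^{*}=\sum_{i=1}^{k}\mu_i=\lambda_1+\sum_{i=2}^{k}\mu_i\geq\lambda_1+(k-1)\mu_k\geq\lambda_1+(k-1)\lambda_n.$$
Since $G$ has an edge, $\A\neq\O$ and $\tr\A=0$, so $\lambda_1>0$ and $\lambda_n<0$; dividing the inequality $\lambda_1+(k-1)\lambda_n\leq 0$ by $\lambda_n<0$ reverses it and gives $k-1\geq-\lambda_1/\lambda_n$, that is, $\chi(G)=k\geq 1-\lambda_1/\lambda_n$.

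The only genuine subtlety — the ``hard part'' — is recognizing that for non-regular graphs one must work with the normalized \emph{weight}-quotient matrix: it is precisely the structure of $\overline{\S}^{*}$ that places $\vecnu$ in its column space and thereby keeps $\lambda_1$ among the eigenvalues of the quotient. With the plain characteristic matrix one would only obtain $\mu_1\leq\lambda_1$, which is insufficient to close the estimate. Everything else is routine interlacing-and-trace bookkeeping.
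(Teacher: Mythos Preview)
Your proof is correct and follows precisely the approach the paper invokes: it is Haemers' weight-partition proof of Hoffman's bound (cf.\ the remark about \cite{BS1979} in the introduction), and the same three ingredients --- interlacing of $\overline{\B}^{*}$ with $\A$, the vanishing diagonal of $\overline{\B}^{*}$, and the fact that $\lambda_1\in\spec\overline{\B}^{*}$ --- appear verbatim as steps (1)--(3) in the paper's proof of Proposition~\ref{weightregularandHoffmanbound}. Your derivation of $\mu_1=\lambda_1$ via $\vecnu=\overline{\S}^{*}\w$ is in fact cleaner than the paper's computation with $\D^{1/2}\j$.
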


When equality holds we call the coloring a \emph{Hoffman coloring}. Recently, there has been some studies on finding reasonable lower bounds of $\chi(G)$ and on extending Hoffman's bound \cite{AL2015,WE2017,WE2013}.

If $\chi(G)\leq 2$, then $G$ is bipartite. Bipartite graphs are easily recognized and there is a
characterization in terms of the eigenvalues \cite{CDS1980}:

\begin{propo} \cite{CDS1980}\label{chromaticbipartite}
$\chi(G)\leq 2$ if and only if $\lambda_{i}=-\lambda_{n-i+1}$ for $i=1,\ldots,n$.
\end{propo}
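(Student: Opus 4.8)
The plan is to prove the two implications separately: the ``only if'' direction via the bipartite block form of $\A$, and the ``if'' direction via the vanishing of the odd power-sums of the eigenvalues.

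$(\Rightarrow)$ Suppose $\chi(G)\le 2$. If $G$ has no edges then all eigenvalues are $0$ and there is nothing to prove, so assume $G$ is bipartite with parts $X$ and $Y$; listing the vertices of $X$ before those of $Y$ puts $\A$ in the block form $\A=\left(\begin{array}{cc}\O & N\\ N^{\top} & \O\end{array}\right)$ for a suitable $|X|\times|Y|$ matrix $N$. If $\w$ is an eigenvector of $\A$ for the eigenvalue $\lambda$, split $\w$ into a block $\x$ indexed by $X$ and a block $\y$ indexed by $Y$; the eigenvalue equation reads $N\y=\lambda\x$ and $N^{\top}\x=\lambda\y$, from which the vector with blocks $\x$ and $-\y$ satisfies the eigenvalue equation for $-\lambda$. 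This correspondence is a linear isomorphism between the eigenspace of $\lambda$ and that of $-\lambda$, so the multiplicities satisfy $m(\lambda)=m(-\lambda)$ for every $\lambda$; since the multiplicities sum to $n$, the decreasing list of eigenvalues satisfies $\lambda_i=-\lambda_{n-i+1}$ for $i=1,\dots,n$.

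$(\Leftarrow)$ Conversely, assume $\lambda_i=-\lambda_{n-i+1}$ for all $i$. Then for every odd integer $k\ge 1$, pairing the index $i$ with $n-i+1$ in $\tr(\A^{k})=\sum_{i=1}^{n}\lambda_i^{k}$ groups the terms into pairs $\lambda_i^{k}+(-\lambda_i)^{k}=0$, hence $\tr(\A^{k})=0$. But $\tr(\A^{k})$ equals the number of closed walks of length $k$ in $G$, a nonnegative integer that is strictly positive whenever $G$ has a closed walk of length $k$ — in particular whenever $G$ contains an odd cycle of length at most $k$. Therefore $G$ has no odd cycle, so $G$ is bipartite and $\chi(G)\le 2$.

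The step requiring the most care is the backward direction, which rests on the standard facts that $\tr(\A^{k})$ enumerates closed $k$-walks and that a closed walk of odd length forces an odd cycle (both found, e.g., in \cite{CDS1980}); everything else is bookkeeping. I would also remark that the argument is insensitive to whether $G$ is connected, since the power-sum computation is global, so no appeal to the Perron--Frobenius theorem or to the characterization of connected bipartite graphs by $\lambda_n=-\lambda_1$ is needed.
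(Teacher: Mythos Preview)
The paper does not supply its own proof of this proposition; it is quoted as a known result from \cite{CDS1980}. Your argument is correct and is essentially the classical one found there: the forward direction via the involution $(\x,\y)\mapsto(\x,-\y)$ on eigenvectors coming from the bipartite block form, and the backward direction via $\tr(\A^{k})=0$ for odd $k$ ruling out odd closed walks and hence odd cycles.
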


In fact, if $G$ is not trivial (isolated vertices), we can put $\chi(G) = 2$. Note  that  by  Proposition \ref{chromaticbipartite}, all 2-chromatic graphs have a Hoffman coloring. Moreover, as mentioned in earlier, such bipartition is always weight-regular. For graphs with a given chromatic number greater than 2, there are not many characterizations in terms of the spectrum. In \cite{BBH2007} the authors give some necessary conditions for a graph to be 3-chromatic in terms of the spectrum of the adjacency matrix. On the other hand, not many infinite graph families having a Hoffman coloring are known and it appears rather difficult to find them. The next result shows that if a graph has a Hoffman coloring, the partition defined by the color classes must be weight-regular:

\begin{propo}\label{weightregularandHoffmanbound}
If $G$ has chromatic number $\chi(G)$ and a Hoffman coloring, then the following holds:
\begin{description}
  \item[$(i)$] The partition defined by the color classes is weight-regular.
  \item[$(ii)$] The multiplicity of $\lambda_n$ is $\chi(G)-1$ and $G$ has a unique coloring with $\chi(G)$ colors (up to permutation of the colors).
\end{description}
\end{propo}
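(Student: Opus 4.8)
\emph{Part $(i)$.} Put $k=\chi(G)$ and let $\mathcal{P}=\{V_1,\dots,V_k\}$ be the partition into color classes, with normalized weight-characteristic matrix $\overline{\S}^{\ast}$ and normalized weight-quotient matrix $\overline{\B}^{\ast}=(\overline{\S}^{\ast})^{\top}\A\,\overline{\S}^{\ast}$. The plan is to run the eigenvalue-interlacing proof of Hoffman's bound in the language of weight partitions and then observe that equality forces the interlacing to be tight. First I would note that each $V_i$ being a coclique makes the diagonal of $\overline{\B}^{\ast}$ vanish, so $\overline{\B}^{\ast}$ is symmetric, nonnegative, has zero trace, and is irreducible since $G$ is connected. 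Next, the positive vector $\z:=(\|\bm{\rho}V_1\|,\dots,\|\bm{\rho}V_k\|)^{\top}$ satisfies $\overline{\S}^{\ast}\z=\vecnu$, so using $(\overline{\S}^{\ast})^{\top}\overline{\S}^{\ast}=\I$ and $\A\vecnu=\lambda_1\vecnu$ I get $\overline{\B}^{\ast}\z=(\overline{\S}^{\ast})^{\top}\A\vecnu=\lambda_1\z$; by Perron--Frobenius, $\lambda_1$ is the largest eigenvalue $\mu_1$ of $\overline{\B}^{\ast}$.

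Then, by Lemma~\ref{LemaCASNORMALITZAT}$(i)$ the eigenvalues $\mu_1\ge\cdots\ge\mu_k$ of $\overline{\B}^{\ast}$ interlace those of $\A$, so $\mu_i\ge\lambda_{n-k+i}\ge\lambda_n$ for $2\le i\le k$, and taking traces
\[
0=\sum_{i=1}^{k}\mu_i=\lambda_1+\sum_{i=2}^{k}\mu_i\ \ge\ \lambda_1+(k-1)\lambda_n ,
\]
which is exactly Hoffman's bound $\chi(G)=k\ge 1-\lambda_1/\lambda_n$. Since the coloring is a Hoffman coloring this is an equality, i.e.\ $\lambda_1+(k-1)\lambda_n=0$; together with $\mu_i\ge\lambda_n$ this forces $\mu_i=\lambda_n$ for all $i\ge2$, and then $\lambda_n\le\lambda_{n-k+i}\le\mu_i=\lambda_n$. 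Hence the interlacing is tight (take $k_0=1$: $\lambda_1=\mu_1$, and $\lambda_{n-k+i}=\mu_i$ for $2\le i\le k$), and Lemma~\ref{LemaCASNORMALITZAT}$(ii)$ gives that $\mathcal{P}$ is weight-regular, proving $(i)$.

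\emph{Part $(ii)$.} Next I would pin down $\overline{\B}^{\ast}$ exactly: it is symmetric with zero diagonal, has Perron eigenvector $\z$, and by the equality above has spectrum $\{\lambda_1,\lambda_n^{(k-1)}\}$; hence $\overline{\B}^{\ast}=\lambda_n\I+(\lambda_1-\lambda_n)\z\z^{\top}/\|\z\|^{2}$, and the vanishing diagonal forces $z_i^{2}/\|\z\|^{2}=-\lambda_n/(\lambda_1-\lambda_n)=1/k$ for every $i$. Thus all color classes carry equal weight, $\|\bm{\rho}V_i\|^{2}=\|\vecnu\|^{2}/k$, the matrix $\D$ is a scalar multiple of $\I$, and $\overline{\B}^{\ast}=\tfrac{\lambda_1-\lambda_n}{k}(\J-\I)$. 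Writing $\bm{\rho}V_j$ for the columns of the weight-characteristic matrix (so $\sum_j\bm{\rho}V_j=\vecnu$) and using the pseudo-regularity relation $\A\,\overline{\S}^{\ast}=\overline{\S}^{\ast}\overline{\B}^{\ast}$ (Lemma~2.2 of \cite{F1999}, as in Lemma~\ref{propo:generalizationGodsil}), which --- because $\overline{\S}^{\ast}$ and $\widetilde{\S}^{\ast}$ are now proportional --- reads column by column and, after using $\lambda_1+(k-1)\lambda_n=0$, as
\[
(\A-\lambda_n\I)\,\bm{\rho}V_j=\tfrac{\lambda_1-\lambda_n}{k}\,\vecnu\qquad(j=1,\dots,k),
\]
I conclude $(\A-\lambda_n\I)(\bm{\rho}V_i-\bm{\rho}V_j)=0$, so the $k-1$ linearly independent vectors $\bm{\rho}V_2-\bm{\rho}V_1,\dots,\bm{\rho}V_k-\bm{\rho}V_1$ lie in the $\lambda_n$-eigenspace; hence $m(\lambda_n)\ge k-1$.

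The reverse inequality $m(\lambda_n)\le k-1$ is the step I expect to be the real obstacle. Equivalently, one must show that every $\lambda_n$-eigenvector already lies in $U:=\langle\bm{\rho}V_1,\dots,\bm{\rho}V_k\rangle=\langle\vecnu\rangle\oplus\langle\bm{\rho}V_2-\bm{\rho}V_1,\dots,\bm{\rho}V_k-\bm{\rho}V_1\rangle$; since $U$ is $\A$-invariant and $\A$ symmetric, $U^{\perp}$ is $\A$-invariant, and the claim is that $\A-\lambda_n\I$ is positive definite on $U^{\perp}$. I would attempt this via a maximum-principle/variational argument exploiting both that $\lambda_n$ is the \emph{least} eigenvalue and that the classes $V_j$ span no edges, but I anticipate that turning this into a clean contradiction is delicate and may need additional structural input. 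Granting $m(\lambda_n)=k-1$, the rest of $(ii)$ is immediate: then $\langle\vecnu\rangle\oplus\ker(\A-\lambda_n\I)=U$ is determined by $\A$ alone, and the orthogonal projector onto $U$, namely $\sum_j\bm{\rho}V_j(\bm{\rho}V_j)^{\top}/\|\bm{\rho}V_j\|^{2}$, has $(u,v)$-entry $k\nu_u\nu_v/\|\vecnu\|^{2}$ when $u,v$ share a color class and $0$ otherwise; hence the nonzero pattern of this $\A$-determined matrix recovers $\mathcal{P}$, so any two colorings of $G$ with $\chi(G)$ colors --- both Hoffman colorings, hence both weight-regular with the same quotient by $(i)$ --- induce the same partition, i.e.\ the $\chi(G)$-coloring is unique up to permutation of the colors.
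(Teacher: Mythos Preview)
Your argument for part $(i)$ is correct and essentially identical to the paper's: both set up the normalized weight-quotient matrix $\overline{\B}^{\ast}$, observe that its diagonal vanishes because color classes are cocliques, verify $\mu_1=\lambda_1$ via the Perron vector, combine the trace identity with interlacing to recover Hoffman's inequality, and then use the Hoffman-coloring equality to force $\mu_2=\cdots=\mu_k=\lambda_n$, whence tight interlacing and Lemma~\ref{LemaCASNORMALITZAT}$(ii)$.

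For part $(ii)$ the two approaches diverge. The paper does not attempt a self-contained argument: it records that tight interlacing already yields $\lambda_{n-k+2}=\cdots=\lambda_n$ (so $m(\lambda_n)\ge\chi-1$; note that the paper's claim that equality was ``already shown in $(i)$'' is in fact only this lower bound), and for the uniqueness of the coloring it simply invokes Proposition~2.3 of \cite{BBH2007}. You instead extract additional structure --- the exact form $\overline{\B}^{\ast}=\lambda_n\I+\frac{\lambda_1-\lambda_n}{k}\J$ (after showing all $\|\bm{\rho}V_j\|$ are equal), and an explicit system of $\lambda_n$-eigenvectors $\bm{\rho}V_i-\bm{\rho}V_1$ --- and then propose to recover uniqueness from the orthogonal projector onto $\langle\vecnu\rangle\oplus\ker(\A-\lambda_n\I)$. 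This is a more informative route than the paper's, and the structural conclusions you obtain along the way are not in the paper. The step you flag as the obstacle, $m(\lambda_n)\le k-1$, is a genuine gap in your sketch; the paper does not close it either within its own text, and this (together with uniqueness) is precisely what the citation to \cite{BBH2007} is carrying. So your proposal is sound as far as it goes, and the missing inequality is exactly the piece the paper outsources rather than proves.
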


\begin{proof} $(i)$ Let $\A$ be the adjacency matrix of $G$ with eigenvalues $\lambda_1\geq \cdots \geq \lambda_n$. Let $\mathcal{P}=\{C_1,C_2,\ldots,C_\chi\}$ represent the partitioning of the vertices of $G$ according to the $\chi$ different color classes of a colouring. Let $\vecnu$ be a real eigenvector belonging to $\lambda_1$. Denote by $\overline{\S}^{\ast}$ the normalized weight-characteristic matrix of $\mathcal{P}$, and let $\overline{\B}^{\ast}=\overline{\S}^{\ast \top}\A\overline{\S}^{\ast}$ be the normalized weight-quotient matrix with eigenvalues $\mu_1\geq \cdots \geq \mu_\chi$. Then $\overline{\S}^{\ast \top}\overline{\S}^{\ast}=\I$ and Interlacing (Lemma \ref{LemaCASNORMALITZAT}(i)) implies:\\
(1) The eigenvalues of $\overline{\B}^{\ast}$ interlace the eigenvalues of $\A$.\\
From the definition of $\overline{\S}^{\ast}$ it is clear that:\\
(2) All diagonal entries of $\overline{\B}^{\ast}$ are zero.\\
Moreover, since $\overline{\S}^{\ast \top}\A\overline{\S}^{\ast}D^{\frac{1}{2}}\j=\overline{\S}^{\ast}\A \vecnu=\lambda_1\overline{\S}^{\ast} \vecnu=\lambda_1\D^{\frac{1}{2}\1}$ ($\D$ is a diagonal matrix with positive entries), it follows that\\
(3) $\lambda_1$ is an eigenvalue of $\overline{\B}^{\ast}$.\\
Let $\mu_1\geq \mu_2 \geq \cdots \geq \mu_\chi$ be the eigenvalues of $\overline{\B}^{\ast}$. Then (1) and (3) imply $\lambda_1=\mu_1$. Furthermore, if $G$ has chromatic number $\chi$ and has a Hoffman coloring, then $\lambda_1=-(\chi-1)\lambda_n=\mu_1$. Using (2), (3) and $\text{trace}\overline{\B}^{\ast}=\mu_1+\mu_2+\cdots +\mu_\chi=0$, we obtain $\mu_2+\cdots+\mu_\chi=-\mu_1=-\lambda_1=(\chi-1)\lambda_n$. \\
By Interlacing (Lemma \ref{LemaCASNORMALITZAT}(i)), we know $\mu_2+\cdots +\mu_{\chi-1}+\mu_\chi=(\chi-1)\lambda_n\leq (\chi-1)\mu_\chi$, thus $\mu_2+\cdots +\mu_{\chi-1}\leq(\chi-2)\mu_\chi$. We also know that $\mu_1\geq \cdots \geq \mu_{\chi-1}\geq \mu_{\chi}$, hence $(\chi-2)\mu_{\chi-1}\leq \mu_2+\cdots +\mu_{\chi-1}\leq (\chi-2)\mu_{\chi}$, which implies $(\chi-2)\mu_{\chi-1}\leq (\chi-2)\mu_\chi$, hence $\mu_{\chi-1}\leq \mu_\chi$. But since by assumption $\mu_{\chi-1}\geq \mu_\chi$, it follows that $\mu_{\chi-1}=\mu_\chi$. We do it recursively until we obtain $\mu_{2}=\cdots=\mu_{\chi-1}=\mu_\chi=\lambda_n$, which means there is tight interlacing and the multiplicity of $\lambda_n$ is $\chi(G)-1$. Finally, by using Lemma \ref{LemaCASNORMALITZAT}(ii) it follows that $\mathcal{P}$ is weight-regular.

$(ii)$ The first part has already been shown in $(i)$, and the second part follows from Proposition 2.3 in \cite{BBH2007}.
\end{proof}

The above result implies that if a graph does not have a weight-regular partition, then it cannot have a Hoffman coloring. Such result may be useful for obtaining contradictions to the existence of non-regular graphs having a Hoffman coloring, and to find families of non-regular graphs for which the Hoffman bound could be improved. Actually, the following corollary is a straight-forward consequence of Proposition \ref{weightregularandHoffmanbound} $(i)$ and shows that Hoffman bound can be improved for certain classes of graphs:

\begin{coro}\label{weightregapplication}
 If $G$ has at least one edge and the vertex partition defined by the $\chi$ color classes is not weight-regular, then
$$\chi(G)\geq 2-\frac{\lambda_1}{\lambda_n}.$$
\end{coro}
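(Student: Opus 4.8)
The plan is to obtain Corollary~\ref{weightregapplication} as the contrapositive of Proposition~\ref{weightregularandHoffmanbound}$(i)$, combined with the definition of a Hoffman coloring and the integrality of $\chi(G)$.

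First I would recall the two ingredients that make the argument run. By Hoffman's theorem (\cite{H1970}), since $G$ has at least one edge we have $\chi(G)\ge 1-\lambda_1/\lambda_n$, and by definition equality is attained precisely by a Hoffman coloring; in particular a Hoffman coloring exists only when $1-\lambda_1/\lambda_n$ is an integer equal to $\chi(G)$. Proposition~\ref{weightregularandHoffmanbound}$(i)$ says that if $G$ admits a Hoffman coloring then the partition into its $\chi(G)$ color classes is weight-regular, and part $(ii)$ adds that such a coloring is the unique $\chi(G)$-coloring of $G$ (up to permutation of colors).

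Now assume, as in the hypothesis, that the partition $\mathcal{P}$ of $V$ into the $\chi(G)$ color classes is not weight-regular. If $G$ had a Hoffman coloring, then by Proposition~\ref{weightregularandHoffmanbound}$(ii)$ that coloring would be the unique $\chi(G)$-coloring, hence the one inducing $\mathcal{P}$, and by part $(i)$ its partition $\mathcal{P}$ would be weight-regular --- contradicting the hypothesis. Hence $G$ has no Hoffman coloring, i.e.\ equality fails in Hoffman's bound, so $\chi(G)>1-\lambda_1/\lambda_n$. In the relevant case where $1-\lambda_1/\lambda_n$ is an integer, $\chi(G)-\bigl(1-\lambda_1/\lambda_n\bigr)$ is then a positive integer, hence at least $1$, which rearranges to $\chi(G)\ge 2-\lambda_1/\lambda_n$, as claimed.

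I expect the only delicate point to be this last step: upgrading ``no Hoffman coloring'' (a strict inequality in Hoffman's bound) to the clean ``$+1$'' improvement $\chi(G)\ge 2-\lambda_1/\lambda_n$ uses that $1-\lambda_1/\lambda_n$ is an integer --- which is exactly the regime in which Hoffman's bound is attainable and hence the only regime in which the corollary carries content (for non-integral $1-\lambda_1/\lambda_n$ one reads the improvement with the appropriate ceiling). Everything substantive is already carried by Proposition~\ref{weightregularandHoffmanbound}; the remainder is bookkeeping with the definition of Hoffman colorings and elementary arithmetic.
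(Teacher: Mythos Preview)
Your approach matches the paper's: the corollary is presented there as ``a straight-forward consequence of Proposition~\ref{weightregularandHoffmanbound}$(i)$,'' and your contrapositive is exactly that. One simplification: you do not need part~$(ii)$. By definition, a Hoffman coloring is \emph{any} $\chi(G)$-coloring when $\chi(G)=1-\lambda_1/\lambda_n$; so if equality held, the given partition $\mathcal{P}$ would itself already be a Hoffman coloring and $(i)$ would apply to it directly --- no uniqueness detour is required to identify $\mathcal{P}$ with ``the'' Hoffman coloring.

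Your caveat about integrality is well taken and is the only genuine subtlety. The step from $\chi(G)>1-\lambda_1/\lambda_n$ to $\chi(G)\ge 2-\lambda_1/\lambda_n$ really uses $1-\lambda_1/\lambda_n\in\mathbb{Z}$; without it the stated inequality can fail (for instance, $C_5$ has $1-\lambda_1/\lambda_n=\sqrt{5}$ and $\chi=3<1+\sqrt{5}$, while none of its $3$-colorings is equitable, hence none is weight-regular). The paper does not flag this, so your remark is a useful addition rather than a gap in your own argument.
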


Finally, we propose the following open problems.

\begin{ques}
Find new conditions on the graph, besides the one of Corollary \ref{weightregapplication}, under which Hoffman's lower bound on the chromatic number can be improved by a factor larger than 1.
\end{ques}

\begin{ques}
Find other examples of tight interlacing for weight-regular partitions.
\end{ques}

%%%%%%%%%%%%%%%%%%%%%%%%%%%%%%%%%%%%%%%%%%%%%%%%%
%\section{Concluding remarks}
%%%%%%%%%%%%%%%%%%%%%%%%%%%%%%%%%%%%%%%%%%%%%%%%%
%
%
%
%
%

%

%
%\begin{problem}
%Is the bipartition of a bipartite graph always weight-regular? YES!
%\end{problem}

\subsection*{Acknowledgments}
 I would like to thank Miquel Àngel Fiol for bringing weight-regular partitions to my attention, and to Willem Haemers for helpful discussions. I would also like to thank the referee who made a suggestion to improve Lemma \ref{propo:generalizationGodsil}.

A preliminary version of this paper appeared in the Proceedings of Discrete Mathematics Days (Seville 2018, Spain), \emph{Electronic Notes in Discrete Mathematics} \textbf{68} (2018), 293--298.

\end{document}